\documentclass[11pt]{amsart}
\usepackage{amsmath,amssymb,amsthm}
\usepackage[margin=1.1in]{geometry}
\usepackage{booktabs}
\usepackage{graphicx}
\usepackage[colorlinks=true,linkcolor=blue,citecolor=blue,urlcolor=blue]{hyperref}
\newtheorem{theorem}{Theorem}[section]
\newtheorem{lemma}[theorem]{Lemma}
\newtheorem{proposition}[theorem]{Proposition}

\theoremstyle{definition}
\newtheorem{definition}[theorem]{Definition}
\theoremstyle{remark}
\newtheorem{remark}[theorem]{Remark}
\newtheorem{example}[theorem]{Example}
\newcommand{\Q}{\mathbb{Q}}
\newcommand{\Z}{\mathbb{Z}}
\newcommand{\F}{\mathbb{F}}
\newcommand{\A}{\mathfrak{A}}
\newcommand{\Qbar}{\overline{\Q}}
\newcommand{\NS}{\mathrm{NS}}
\newcommand{\fr}[1]{\langle #1\rangle}
\DeclareMathOperator{\Gal}{Gal}
\DeclareMathOperator{\tr}{tr}
\DeclareMathOperator{\Pic}{Pic}

\title[Galois-invariant N\'eron--Severi ranks of Fermat surfaces]{Galois-invariant N\'eron--Severi ranks
of Fermat surfaces over number fields: a Galois module, closed forms, a threshold, and exact tables}
\author{Rifat Jumagulov}
\email{jum.rifm@gmail.com}
\date{July 2026}

\begin{document}
\begin{abstract}
For a Fermat surface $X_d\colon x_0^d+x_1^d+x_2^d+x_3^d=0$ we give a character-theoretic formula for
the Galois-invariant
N\'eron--Severi (Picard) rank $\rho_K(X_d)=\dim\bigl(\NS(X_{\Qbar})\otimes\Q\bigr)^{\Gal(\Qbar/K)}$ for
every subfield $K\subseteq\Q(\zeta_d)$, and reduce the case of an arbitrary number field $K$ to
$\rho_K=\rho_{K\cap L}$ (intersection inside a fixed $\Qbar$), where $L$ is the field of definition of
the geometric Picard group determined by Gvirtz-Chen--Skorobogatov \cite{GCS}. The $\zeta_d$-rational N\'eron--Severi group is described
explicitly as a monomial Galois module, making $\rho_K$ a character average --- evaluated in closed
form for $\gcd(d,6)=1$, and per degree for $4\le d\le30$. For degree coprime to $6$ we prove
$\rho_\Q(X_d)=1+3(\Psi_2(d)-3\tau(d)+2)$ with $\Psi_2$ multiplicative, so $\rho_\Q(X_p)=3p-5$ for prime
$p\ge5$; we establish a field-of-definition threshold with explicit Hasse--Davenport witnesses and an
assembled orbit rule for even $d$; and we tabulate $\rho_\Q(X_d)$ exactly for $4\le d\le30$, the
exceptional entries at $d=14,24,28,30$ being proved by Hasse--Davenport identities and a stabilizer
descent, and corroborated by exact $\Z[\zeta_m]$ evaluation and by the per-character computation of
\cite{GCS}. In summatory form, $\sum_{d\le x,\gcd(d,6)=1}\rho_\Q(X_d)\sim\tfrac35 x^2$. All results
are unconditional: on a surface the algebraic classes are the rational $(1,1)$-classes by Lefschetz. Ancillary
files provide standalone exact-arithmetic scripts and machine-readable certificates.
\end{abstract}
\maketitle

\section{Relation to prior work}\label{sec:intro}
The \emph{geometric} Picard number $\rho_{\Qbar}$ of a Fermat surface is classical: Shioda
\cite{Shioda79,Shioda81}, Aoki \cite{Aoki83}, Aoki--Shioda \cite{AS83}. The lines generate
$\NS(X_d)\otimes\Q$ for $\gcd(d,6)=1$ (Shioda \cite[Thm.~7]{Shioda81}, restated in
\cite[Thm.~3.1]{SSvL}; proved for every $d$ coprime to $6$ by Degtyarev \cite{Degtyarev} and computationally for
$d\le100$ by Sch\"utt--Shioda--van Luijk \cite{SSvL}); the full equivalence ``$d\le4$ or $\gcd(d,6)=1$'' folds in the
classical small degrees ($d\le3$ classical, $d=4$ due to Mizukami \cite{Mizukami}) and is stated as such
in \cite[Thm.~3.1]{SSvL}. The \emph{field of definition} $L$ of the geometric Picard group is determined
for arbitrary $d$ by Gvirtz-Chen--Skorobogatov \cite{GCS}: $L$ is the compositum of $\Q(\mu_{2d})$,
$\Q(2^{2/d})$ if $2\mid d$, $\Q(3^{3/d})$ if $3\mid d$, and finitely many fields $M_{d'}$, indexed by
the exceptional divisor levels $d'$ of $d$ ($12\le d'\le180$),
with $L=\Q(\mu_{2d})$ exactly when $d\le4$ or $\gcd(d,6)=1$.

What is new here is the \emph{rank} --- and the evaluated form of the module. Two layers should be
distinguished. Given the classical eigenspace decomposition \cite{Shioda79} and the per-character
description of \cite{GCS} (the Gr\"ossencharacter), the monomial-module formalism of \S\ref{sec:setup} and
the invariant averaging over intermediate fields (Theorem~\ref{thm:J1}) are formal
representation-theoretic consequences, and no novelty is claimed for the formalism itself; likewise
the reduction $\rho_K=\rho_{K\cap L}$ is a formal corollary of the existence of the field $L$,
recorded here for use. What this paper adds is the \emph{evaluation}: GCS determine which field $L$
trivialises the Galois
action but do not compute the Galois-invariant rank $\rho_K$. To the best of the author's knowledge,
no previous work gives a closed formula of this form, or a table, for the Galois-invariant rank of the
Fermat family over $\Q$; the adjacent strands --- geometric Picard numbers, line generation, fields of
definition, individual diagonal quartics --- are cited above and below, and among the works cited here
the $\rho_K$ evaluations are confined to degree $4$. For
$d=4$ an explicit character decomposition of $\NS\otimes\Q$ appears only for the Dwork quartic at the
$\rho=19$ stratum (Duan \cite{Duan}), and the Brauer-group literature (Ieronymou--Skorobogatov--Zarhin
\cite{ISZ}) treats $\NS$ as a Galois module only for diagonal quartics; also at $d=4$, the Picard group
of diagonal quartic surfaces over $\Q$ and its Galois cohomology have a prior computational literature
via the $48$ lines (Pinch--Swinnerton-Dyer \cite{PSD}, Swinnerton-Dyer \cite{SD2}, Bright
\cite{Bright}). We give: a monomial description
of the Galois module for all $d$, explicitly evaluated in the stated regimes, with $\rho_K$ a character
average for $K\subseteq\Q(\zeta_d)$ ---
explicit wherever the per-degree character data is computed, i.e.\ in closed form for $\gcd(d,6)=1$ and
per degree for $4\le d\le30$ --- and reduced to the
field-of-definition case for every number field $K$ (\S\ref{sec:module}), the closed form of
\S\ref{sec:odd}, the threshold theorem of \S\ref{sec:threshold}, the unconditional table $4\le d\le30$
(\S\ref{sec:tables}), and the first average order known to the author (\S\ref{sec:asymp}). A companion paper treats the
codimension-$n/2$ analogue on higher-dimensional Fermat varieties, where the invariant is no longer a
Picard rank.

\section{Setup and the reduction}\label{sec:setup}
Let $X=X_d\subset\mathbb{P}^{3}$ be the Fermat surface, with the $\mu_d^{4}/\mu_d$-action and eigenspace
decomposition $H^2_{\mathrm{prim}}(X,\mathbb{C})=\bigoplus_a V(a)$ over characters $a=(a_1,\dots,a_4)$,
$a_i\in(\Z/d)\smallsetminus\{0\}$, $\sum a_i\equiv 0\ (d)$ \cite{Shioda79}. The \emph{algebraic characters}
are
\[
\A \;=\;\Bigl\{a \;:\; \textstyle\sum_i\fr{t a_i/d}=2\ \ \forall t\in(\Z/d)^\times\Bigr\},
\]
where $\fr\cdot$ is the fractional part; by the Lefschetz $(1,1)$ theorem $\bigoplus_{a\in\A}V(a)$ is the complexification of the primitive
part of $\NS(X)\otimes\Q$, so all results below are unconditional.

At a split prime $p\equiv1\ (d)$, with $\chi$ a character of order $d$ on $\F_p^\times$ and
$g(\eta)=\sum_{x\in\F_p^\times}\eta(x)\,e^{2\pi ix/p}$ the Gauss sum, the Frobenius acts on $V(a)$ by a
normalized Jacobi product whose \emph{twist}
$u_a(p)=\prod_i g(\chi^{a_i})/p^{2}$ lies in $\mu_w$, $w=d$ for even $d$ and $w=2d$ for odd $d$ (the
torsion of $\Q(\zeta_d)^\times$). (Conventions: Frobenius means the \emph{geometric} Frobenius on
$H^2_{\mathrm{\acute et}}$; replacing it by the arithmetic one inverts every $u_a$, which permutes
each orbit's twist values and affects no statement below --- cf.\ the choice-independence in
Definition~\ref{rem:terms}. By Weil's theorem the Jacobi sums define a Gr\"ossencharacter of $\Q(\zeta_d)$ \cite{Weil52};
that the normalized value $u_a(p)$ moreover has \emph{finite order} for algebraic $a$ follows from
the unit argument (the two steps are separated exactly this way in \cite[\S6]{GCS}): every
archimedean absolute value of $u_a(p)$ is $1$ (as $|g(\eta)|=\sqrt p$); it is an algebraic unit ---
above $p$ by Stickelberger together with the Hodge condition, away from $p$ since
$g(\eta)\,g(\eta^{-1})=\eta(-1)\,p$ --- hence by Kronecker a root of unity of $\Q(\zeta_d)$,
i.e.\ in $\mu_w$.) Characters $a$ are \emph{ordered} tuples (characters of
$\mu_d^4/\mu_d$); coordinate permutations are not quotiented, and all orbits below are
$(\Z/d)^\times$-scaling orbits of tuples. Frobenius reciprocity for the resulting
monomial Galois representation (Proposition~\ref{prop:orbitcontrib}) gives the counting rule used
throughout:
\begin{equation}\label{eq:orbitrule}
\rho_\Q \;=\; 1+\#\bigl\{(\Z/d)^\times\text{-orbits of }\A\text{ with trivial stabilizer
cocycle-character and twist}\bigr\},
\end{equation}
where a single split prime witnessing $u_a(p)\neq1$ eliminates the orbit of $a$, and the stabilizer
character of an orbit fixed by $t$ --- the quadratic cocycle $\varepsilon$ of Theorem~\ref{thm:assembled},
derived in Lemma~\ref{lem:sublevel} and Proposition~\ref{prop:cocycle} --- is evaluated on lifts of $t$
to $(\Z/2d)^\times$ for even $d$ (a modulus sufficient for the parity cocycle; see \S\ref{sec:even}).

\begin{definition}[Terminology and the split-prime criterion]\label{rem:terms}
Five terms recur. The \emph{vertical twist} (Gr\"ossencharacter twist) of $a$ is the finite-order character $\chi_a$ of
$\Gal(\Qbar/\Q(\zeta_d))$ defined above, recorded through its Frobenius values
$\chi_a(\mathrm{Frob}_{\mathfrak p})=u_a(p)$ at the primes $\mathfrak p$ of $\Q(\zeta_d)$ above split
$p\equiv1\ (d)$ (every such $\mathfrak p$ is unramified and of good reduction, since $p\nmid d$, and
these degree-one primes have density $1$ among the primes of $\Q(\zeta_d)$; replacing the auxiliary character $\chi$ by
$\chi^s$, $s\in(\Z/d)^\times$ --- equivalently, changing the choice of $\mathfrak p$ above $p$ ---
replaces $a$ by $sa$, so all orbit-level statements are independent of the choice); by Chebotarev
density $\chi_a$ is trivial iff $u_a(p)=1$ at every split prime, so a single split prime with
$u_a(p)\neq1$
\emph{proves} nontriviality, while triviality is only ever certified by an exact identity. The
\emph{orbit ceiling} is $1+\#(\A/(\Z/d)^\times)$, the value of $\rho_\Q$ if every orbit contributed its
maximal $1$; it is an upper bound, each orbit contributing $0$ or $1$
(Proposition~\ref{prop:orbitcontrib}).
A decomposable character is \emph{$\Sigma$-even} if its pair-representative sum $\Sigma x(a)$
(canonical representatives $x<d/2$, self-paired $d/2$ counted once per pair) is even; this parity is
independent of all choices (Theorem~\ref{thm:parity}). An \emph{exceptional} character is one in the
exceptional stratum of the classification (\S\ref{sec:prov}): neither decomposable nor on a family.
The \emph{exceptional layer} at level $d$
consists of the vertically-trivial non-decomposable orbits; its contribution to $\rho_\Q$ is
denoted $\mathcal E_{d}$ (the coefficient field $\Q(\zeta_d)$ is written $E$ --- the two do not
collide).
\end{definition}

\begin{remark}[The invariant dimension is the arithmetic rank]\label{rem:rank}
Throughout, $\rho_K$ is the dimension of the Galois invariants, and it equals the rank of $\NS(X_K)$:
by Hochschild--Serre the cokernel of $\NS(X_K)\to\NS(X_{\Qbar})^{\Gal(\Qbar/K)}$ embeds in the
(torsion) Brauer group $\mathrm{Br}(K)$, so the two agree after $\otimes\,\Q$; and $\Pic=\NS$ here
over any field of definition, since $H^1(X_d,\mathcal O)=0$ for a surface in $\mathbb P^3$, so
$\Pic^0$ is trivial.
\end{remark}

\begin{proposition}[Orbit contribution]\label{prop:orbitcontrib}
Let a finite group $G$ act transitively on a finite set of one-dimensional eigenlines, with the
stabilizer $G_a$ of a line $V(a)$ acting on it by a character $\varepsilon_a$. Then the orbit module
is isomorphic to the induced module $\operatorname{Ind}_{G_a}^{G}\varepsilon_a$, and
\[
\dim\bigl(\operatorname{Ind}_{G_a}^{G}\varepsilon_a\bigr)^{G}\;=\;\dim\varepsilon_a^{G_a}\in\{0,1\},
\]
equal to $1$ iff $\varepsilon_a$ is trivial on $G_a$.
\end{proposition}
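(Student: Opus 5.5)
The argument splits into two standard steps: identify the orbit module with an induced module, then apply Frobenius reciprocity. Throughout, the coefficient field is characteristic zero (here $\mathbb{C}$, or $E=\Q(\zeta_d)$). Let $W=\bigoplus_{b\in G\cdot a}V(b)$ be the orbit module, and fix a representative line $V(a)$ with stabilizer $G_a$. Since $G_a$ preserves $V(a)$, the line $V(a)$ is a $G_a$-submodule of $\operatorname{Res}^G_{G_a}W$, on which $G_a$ acts by the character $\varepsilon_a$. By the universal property of induction (equivalently, Frobenius reciprocity for Hom), the inclusion $\varepsilon_a\hookrightarrow\operatorname{Res}W$ corresponds to a $G$-map
\[
\Phi\colon\operatorname{Ind}_{G_a}^G\varepsilon_a=\Q[G]\otimes_{\Q[G_a]}V(a)\longrightarrow W,\qquad g\otimes v\mapsto gv .
\]

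Next I show $\Phi$ is an isomorphism. Choose coset representatives $g_1,\dots,g_n$ of $G/G_a$. The source is $\bigoplus_i g_i\otimes V(a)$, and $\Phi$ sends $g_i\otimes V(a)$ onto $g_iV(a)=V(g_i a)$. By transitivity, the lines $V(g_ia)$ are exactly the lines of the orbit. They are pairwise distinct because $g_ia=g_ja$ forces $g_j^{-1}g_i\in G_a$. They are independent because $W$ is their direct sum (they are distinct eigenlines). Hence $\Phi$ maps a direct sum of $n$ lines bijectively onto a direct sum of $n$ lines, so it is an isomorphism. The point needing care is the notion of ``stabilizer'': it must mean the stabilizer of the line $V(a)$ as a subspace, not of a vector. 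The character $\varepsilon_a$ records the scalar by which $G_a$ acts. In the application this scalar is the cocycle/twist, so the statement must be read with that convention; I note this explicitly rather than treat it as an obstacle.

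For the invariants, use $\dim U^G=\dim\operatorname{Hom}_G(\mathbf 1,U)$ and Frobenius reciprocity:
\[
\operatorname{Hom}_G(\mathbf 1,\operatorname{Ind}_{G_a}^G\varepsilon_a)\cong\operatorname{Hom}_{G_a}(\mathbf 1,\varepsilon_a).
\]
Since $\varepsilon_a$ is one-dimensional, the right-hand side has dimension $1$ if $\varepsilon_a$ is trivial and $0$ otherwise. As an elementary check, an invariant vector $w=\sum_i w_i$ with $w_i\in V(g_ia)$ is determined by its component $w_1\in V(a)$, because $g_i$ carries $V(a)$ onto $V(g_ia)$ and invariance forces $w_i=g_iw_1$. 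That component must satisfy $hw_1=w_1$ for all $h\in G_a$. Conversely, any such $w_1$ extends to the invariant vector $\sum_i g_iw_1$, which is well defined since $w_1$ is $G_a$-fixed. This is the same conclusion via an averaging argument, requiring no characteristic assumption beyond the lines being honest eigenlines.
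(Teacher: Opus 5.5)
Your proof is correct and rests on the same step as the paper's one-line proof, namely Frobenius reciprocity $\operatorname{Hom}_G(\mathbf 1,\operatorname{Ind}_{G_a}^G\varepsilon_a)\cong\operatorname{Hom}_{G_a}(\mathbf 1,\varepsilon_a)$. Beyond that, you prove the identification of the orbit module with $\operatorname{Ind}_{G_a}^G\varepsilon_a$ via coset representatives, which the paper asserts without argument, and you add an elementary orbit-sum computation of the invariants; both are sound (the induced module is more naturally written $E[G]\otimes_{E[G_a]}V(a)$, but this agrees with your $\Q[G]\otimes_{\Q[G_a]}V(a)$ since $G$ acts $E$-linearly).
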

\begin{proof}
Frobenius reciprocity:
$\operatorname{Hom}_G(\mathbf 1,\operatorname{Ind}_{G_a}^{G}\varepsilon_a)
=\operatorname{Hom}_{G_a}(\mathbf 1,\varepsilon_a)$.
\end{proof}
\noindent Every orbit count below is an instance of this rule applied to the vertically-trivial layer;
a vertically nontrivial orbit contributes $0$ before the rule is reached, having no invariants already
under $\Gal(\Qbar/\Q(\zeta_d))$.

\smallskip
\noindent For reference, the recurring notation:
\begin{center}\small
\begin{tabular}{@{}ll@{}}
\toprule
$\A$ & algebraic characters (Hodge condition); orbit ceiling $=1+\#(\A/(\Z/d)^\times)$\\
$V(a)$ & the $a$-eigenline ($\mathbb C$-eigenspace in \S\ref{sec:setup}; its $E$-form from \S\ref{sec:even} on)\\
$u_a$, $\chi_a$ & the vertical (Gr\"ossencharacter) twist and its global character (Def.~\ref{rem:terms})\\
$\mathcal S$ & the $\Q(\zeta_d)$-rational algebraic characters (trivial vertical twist, \S\ref{sec:even})\\
$\varepsilon_a$ & the stabilizer cocycle character (\S\ref{sec:even}; derived in Prop.~\ref{prop:cocycle})\\
$E$;\ $\mathcal E_d$ & the coefficient field $\Q(\zeta_d)$; the exceptional-layer contribution to $\rho_\Q$\\
$L$;\ $L_\chi$, $M_{d'}$ & field of definition of $\NS(X_{\Qbar})$; per-character twist fields, their compositum \cite{GCS}\\
$k$;\ $\Sigma x(\cdot)$ & $k=(p-1)/d$; the pair-representative sum (Def.~\ref{rem:terms})\\
$m$ & a level: exceptional $m=2q$ (Thm~\ref{thm:qr}); sub-level $\gcd(t-1,d)$ (Lemma~\ref{lem:sublevel}, local)\\
$\psi$, $\rho$ & the odd-part (order-$q$) and quadratic characters at level $m=2q$ (Thm~\ref{thm:qr})\\
\bottomrule
\end{tabular}
\end{center}

\section{Odd degree: dichotomy and the closed form}\label{sec:odd}

\begin{lemma}[Pairing lemma]\label{lem:pairing}
If $\chi$ has odd order then $\chi(-1)=1$ and $g(\chi^x)g(\chi^{-x})=p$ for $x\not\equiv0$.
Consequently every \emph{decomposable} character (one admitting a perfect matching into zero-sum pairs)
has twist exactly $1$ at every split prime.
\end{lemma}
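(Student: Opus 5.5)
The plan is to argue in three short steps: first the value $\chi(-1)$, then the classical Gauss sum identity, and finally the multiplicativity of the twist over the pairs of a matching.

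\emph{Step 1.} Let $n$ be the (odd) order of $\chi$. Since $(-1)^2=1$, we have $\chi(-1)^2=1$, so $\chi(-1)=\pm1$. But $\chi(-1)$ is also an $n$-th root of unity, and $-1$ is not one when $n$ is odd. Hence $\chi(-1)=1$. The same holds for every power $\chi^x$, whose order divides $n$ and is therefore odd.

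\emph{Step 2.} I will use the standard identity
\[
g(\eta)\,g(\eta^{-1})=\eta(-1)\,p \qquad\text{for nontrivial }\eta,
\]
already recalled in \S\ref{sec:setup}. For completeness, I would derive it as follows. Expand the product as
\[
\sum_{x,y}\eta(xy^{-1})\,e^{2\pi i(x+y)/p}.
\]
Substitute $x=ty$ to get
\[
\sum_t\eta(t)\sum_{y\neq0}e^{2\pi i y(t+1)/p}.
\]
The inner sum equals $p-1$ when $t=-1$ and $-1$ otherwise. Using $\sum_t\eta(t)=0$, the total is $\eta(-1)p$. Apply this with $\eta=\chi^x$, which is nontrivial because $x\not\equiv0\pmod d$ and $\chi$ has order $d$. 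Combined with Step 1, this gives $g(\chi^x)g(\chi^{-x})=p$.

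\emph{Step 3.} Take a decomposable $a=(a_1,\dots,a_4)$ with a perfect matching into pairs $\{a_i,a_j\}$ satisfying $a_i+a_j\equiv0\pmod d$. The twist factors over the two pairs:
\[
u_a(p)=\frac{g(\chi^{a_i})g(\chi^{a_j})}{p}\cdot\frac{g(\chi^{a_k})g(\chi^{a_l})}{p}.
\]
In each pair, $\chi^{a_j}=\chi^{-a_i}$ and $a_i\not\equiv0$, so by Step 2 each factor equals $1$. Hence $u_a(p)=1$ at every split prime.

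The one point requiring care is the hypothesis. In the odd-degree section $\chi$ has order $d$ with $d$ odd, so Step 1 applies. For even $d$ the same computation yields only $u_a(p)=\chi^{a_i}(-1)\,\chi^{a_k}(-1)$, which is where the $\Sigma$-parity of Definition~\ref{rem:terms} enters. The hypothesis of the lemma excludes that case, and I expect no real obstacle here.
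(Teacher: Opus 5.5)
Your proposal is correct and follows the paper's own proof: odd order forces $\chi(-1)=1$, the identity $g(\eta)g(\eta^{-1})=\eta(-1)p$ then gives $g(\chi^x)g(\chi^{-x})=p$, and $u_a(p)$ factors over the two zero-sum pairs. The only difference is that you derive the Gauss-sum identity explicitly (and correctly), where the paper simply cites it as standard.
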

\begin{proof}
$\chi(-1)^2=1$ and $\chi(-1)\in\mu_d$ with $d$ odd force $\chi(-1)=1$; the identity
$g(\chi)g(\chi^{-1})=\chi(-1)p$ is standard. A decomposable $4$-tuple is two such pairs, so
$\prod_i g(\chi^{a_i})=p^{2}$.
\end{proof}

\begin{theorem}[Dichotomy]\label{thm:dichotomy}
For $d\ge4$, $\rho_\Q(X_d)$ equals the orbit ceiling $1+\#(\A/(\Z/d)^\times)$ iff $\gcd(d,6)=1$.
\end{theorem}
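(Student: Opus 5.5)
We prove the two directions separately, using the orbit rule \eqref{eq:orbitrule} and Proposition~\ref{prop:orbitcontrib}: $\rho_\Q$ attains the orbit ceiling exactly when every $(\Z/d)^\times$-orbit of $\A$ has trivial vertical twist \emph{and} trivial stabilizer character $\varepsilon_a$.

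\emph{Sufficiency ($\gcd(d,6)=1$).} By the line-generation theorem (Shioda, Degtyarev, as recalled in \S\ref{sec:intro}), for $\gcd(d,6)=1$ every algebraic character is decomposable: $\A$ consists exactly of the tuples admitting a perfect matching into zero-sum pairs. Since $d$ is odd, Lemma~\ref{lem:pairing} gives $u_a(p)=1$ at every split prime. This is an exact identity, not merely a test, so by Chebotarev $\chi_a$ is trivial and $V(a)$ is $\Q(\zeta_d)$-rational. It remains to show $\varepsilon_a\equiv1$ on each stabilizer. For this I would use that $V(a)$ for decomposable $a=(x,-x,y,-y)$ (up to permutation) is spanned by the difference of classes of lines $\{x_0=\zeta x_1,\ x_2=\zeta' x_3\}$ summed against characters. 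The class $\sum_{j,k}\zeta_d^{-(xj+yk)}[\ell_{j,k}]$ is carried by $\sigma_t$ to the corresponding class for $ta$. On the stabilizer ($ta$ a permutation of $a$), $\sigma_t$ therefore acts by permuting lines, not by a sign. The lines here are $x_0=\eta x_1$ with $\eta^d=-1$. For odd $d$ we may take $\eta=-\zeta$, so the lines are defined over $\Q(\zeta_d)$ and the Galois action is pure index scaling. Hence $\varepsilon_a=1$, and every orbit contributes $1$.

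\emph{Necessity.} Suppose $2\mid d$ or $3\mid d$ with $d\ge4$. It suffices to exhibit one orbit that contributes $0$. If $3\mid d$, take $d=3e$ and the non-decomposable character $a=(e,e,e,3e\cdot?)$. Concretely, I would use the classical exceptional type $(e,e,e,0)$ lifted to a primitive-support tuple such as $(1,1,1,d-3)$-type families at level $3$ multiplied up. I would then show via Hasse--Davenport that $\prod g(\chi^{a_i})/p^2$ involves a cubic Jacobi sum whose normalized value is a cube-root-of-$3$ type twist. Equivalently, the twist field contains $\Q(3^{1/3})$-type data (matching $L\ni 3^{3/d}$ in \cite{GCS}), so some split prime gives $u_a(p)\neq1$. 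If $2\mid d$, I would instead take a decomposable $a=(x,-x,d/2,d/2)$ or $(x,-x,y,-y)$. Here $\chi(-1)=(-1)^{k}$ can be $-1$, making $u_a(p)=\chi^x(-1)\chi^y(-1)$, and choosing $x+y$ odd together with a split prime with $k$ odd gives $u_a(p)=-1$. When $4\mid d$ this fails, since $k=(p-1)/d$ has no forced parity constraint from $x$. In that case I would fall back on the fourth-root-of-$2$ phenomenon ($L\ni 2^{2/d}$) and the $\Sigma$-parity, exhibiting a $\Sigma$-odd decomposable orbit with nontrivial stabilizer cocycle.

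The main obstacle is necessity for every such $d$ uniformly. One must produce, for all $d$ with $\gcd(d,6)>1$, a single explicit orbit together with an explicit split prime or stabilizer element that kills it. I would reduce to the minimal levels $d'\in\{4,6,9,\dots\}$ dividing $d$ by pulling characters back along $d'\mid d$, which preserves both algebraicity and the twist. The remaining case analysis then happens at small levels, where it can be checked directly by a single Gauss-sum computation at one prime.
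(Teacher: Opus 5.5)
\textbf{The gap: necessity for odd $d$ with $3\mid d$.} In this case every decomposable orbit contributes $1$: the twist is trivial by Lemma~\ref{lem:pairing}, and the stabilizer scalars are trivial at odd level (Proposition~\ref{prop:odd3}). So you need a non-decomposable algebraic witness, and you do not have one. The tuple $(e,e,e,0)$ has a zero entry. The tuple $(1,1,1,d-3)$ has $\sum_i\fr{a_i/d}=1$, so it is not in $\A$ at all.

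The witness that works is the progression $a=(1,\,1+\tfrac d3,\,1+\tfrac{2d}3,\,-3)$:
\begin{itemize}
\item it is algebraic by the Hermite identity;
\item the Hasse--Davenport product relation with $\psi=\chi^{d/3}$ gives $u_a(p)=\chi(3)^{-3}$ exactly;
\item this Kummer character of $3^{1/(d/3)}$ is nontrivial on $\Gal(\Qbar/\Q(\zeta_d))$, because $\Q(3^{1/(d/3)})$ is non-normal for $d/3\ge3$ and so cannot lie in any abelian field.
\end{itemize}

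Your reduction to ``minimal levels'' does not replace this, for two reasons. First, the minimal levels include $2\ell$ and $3\ell$ for every prime $\ell\ge5$ (e.g.\ $d=15$ has no divisor in $\{4,6,9\}$), so nothing finite is left to check. Second, the pullback $b\mapsto fb$ does not preserve nontriviality of the twist. It restricts the character to the smaller group $\Gal(\Qbar/\Q(\zeta_d))$, where it can die. For example, the $\Sigma$-odd character $(1,3,2,2)$ at level $4$ pulls back to the $\Sigma$-even $(2,6,4,4)$ at level $8$, whose twist is trivial; that orbit is removed only by the stabilizer cocycle (Example~\ref{ex:d8descent}). Only witnesses whose twist field is non-abelian over $\Q$ survive every pullback, which is why the paper uses the Kummer radicals.

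\textbf{Even $d$: your first idea works, and the $4\mid d$ caveat is wrong.} A $\Sigma$-odd decomposable character such as $(1,d-1,2,d-2)$ (which is $(1,3,2,2)$ at $d=4$) has $u_a(p)=(-1)^{k\,\Sigma x(a)}$ by Theorem~\ref{thm:parity}. A split prime $p\equiv1+d\pmod{2d}$, which exists by Dirichlet, makes $k=(p-1)/d$ odd for \emph{every} even $d$, including $4\mid d$; at $d=4$, the prime $p=5$ already gives $u_a=-1$. Your fallback is therefore unnecessary. It is also confused, since a $\Sigma$-odd orbit is killed by its vertical twist before any stabilizer cocycle enters. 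With the caveat deleted, this is a valid and more uniform alternative to the paper's route, which uses family~(a) with the non-normal radical $2^{1/(d/2)}$ for $d\ge6$ plus the direct computation $\rho_\Q(X_4)=5$.

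\textbf{Sufficiency is sound, by a different route.} The paper uses Lemma~\ref{lem:sublevel} with the pairing lemma at sub-level $m$. You instead use that, for odd $d$, the lines $\{x_0=-\zeta_d^{\,j}x_1,\ x_2=-\zeta_d^{\,k}x_3\}$ are defined over $\Q(\zeta_d)$, so $\sigma_t$ sends the explicit eigenvector to the one for $t^{\pm1}a$ with no scalar. Two small repairs are needed. The stabilizer means $ta=a$ entrywise on ordered tuples, not ``$ta$ a permutation of $a$''. You also need some line class to have nonzero projection onto $V(a)$, which line generation supplies.
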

\begin{proof}
$(\Leftarrow)$ For $\gcd(d,6)=1$ the classification of \S\ref{sec:prov} gives $\A=$ the decomposable set,
and Lemma~\ref{lem:pairing} applies, so every orbit has trivial twist; every stabilizer scalar is
trivial as well (the sub-level character at odd level is trivial by Lemma~\ref{lem:pairing}, via
Lemma~\ref{lem:sublevel}), so every orbit contributes; hence
$\rho_\Q=1+\#(\A/(\Z/d)^\times)$.
$(\Rightarrow)$ Suppose $\gcd(d,6)>1$. At $d=4$ the claim is a direct computation: $\rho_\Q(X_4)=5$
while the ceiling is $1+10=11$ (\S\ref{sec:tables}, corroborated geometrically by the line computation
of \S\ref{sec:prov}). For $d\ge5$ we exhibit one algebraic orbit with nontrivial twist, which by the
counting rule \eqref{eq:orbitrule} contributes $0$ and so forces $\rho_\Q<$ the ceiling. If $2\mid d$ (so
$d\ge6$), family~(a) at $x=1$, namely $a=(2,\,d-1,\,\tfrac d2-1,\,\tfrac d2)$, is nondecomposable with twist
the Kummer character of $2^{1/(d/2)}$ (Theorem~\ref{thm:families}); it is nontrivial since $d/2\ge3$ makes
$\Q(2^{1/(d/2)})/\Q$ non-normal, hence not contained in the abelian field $\Q(\mu_{2d})$. If $d$ is odd with
$3\mid d$ (so $d\ge9$), the progression $a=(1,\,1+\tfrac d3,\,1+\tfrac{2d}3,\,-3)$ has twist $\chi(3)^{-3}$,
the Kummer character of $3^{1/(d/3)}$, nontrivial by the same argument (Theorem~\ref{thm:threshold}(2)).
Either way an orbit is lost, giving $\rho_\Q<$ the ceiling. (Equivalently, and independently: if every orbit
contributed its maximal $1$ then $\Gal(\Qbar/\Q(\mu_{2d}))$ would fix each one-dimensional $V(a)$, so
$\NS(X_{\Qbar})$ would be defined over $\Q(\mu_{2d})$, i.e.\ $L\subseteq\Q(\mu_{2d})$; but \cite{GCS} give
$L=\Q(\mu_{2d})$ only for $d\le4$ or $\gcd(d,6)=1$, so for $\gcd(d,6)>1$ with $d\ge5$ (the case
$d=4$ being the direct computation above) some orbit has a nontrivial
character and $\rho_\Q<$ the ceiling.)
\end{proof}

\smallskip
\noindent The case split at $d=4$ is forced by the witness, not the conclusion: family (a) at $x=1$
degenerates there to $(2,3,1,2)$, which is decomposable --- its twist is the cyclotomic parity
character $(-1)^{k}$ of Theorem~\ref{thm:parity} ($\Sigma=3$), not a non-abelian Kummer character
(at $d=4$ the radical $2^{1/(d/2)}=\sqrt2$ generates a normal extension) --- so the
Kummer-nontriviality route needs $d\ge5$.

\begin{theorem}[Closed form]\label{thm:closedform}
For $\gcd(d,6)=1$,
\[
\rho_\Q(X_d)\;=\;1+3\bigl(\Psi_2(d)-3\tau(d)+2\bigr),\qquad
\Psi_2(p^k)=\sigma(p^k)+\sigma(p^{k-1})
\]
($\Psi_2$ multiplicative with $\Psi_2(1)=1$, $\sigma(n)=\sum_{e\mid n}e$, $\tau$ the divisor count). In
particular $\rho_\Q(X_p)=3p-5$ for prime
$p\ge5$, and $\rho_\Q(X_{25})=91$, $\rho_\Q(X_{35})=160$, $\rho_\Q(X_{49})=175$.
\end{theorem}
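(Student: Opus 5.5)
The plan is to reduce the theorem to pure orbit counting, using Theorem~\ref{thm:dichotomy}, and then evaluate that count as a divisor sum. For $\gcd(d,6)=1$ the $(\Leftarrow)$ direction of that theorem gives $\rho_\Q(X_d)=1+\#(\A/(\Z/d)^\times)$. Its proof also shows that every algebraic character is decomposable (the classification of \S\ref{sec:prov}), that every twist is trivial (Lemma~\ref{lem:pairing}), and that every stabilizer character is trivial. So it suffices to prove
\[
\#(\A/(\Z/d)^\times)=3\bigl(\Psi_2(d)-3\tau(d)+2\bigr)
\]
for $\A$ the set of ordered decomposable $4$-tuples. No arithmetic beyond what the Dichotomy already supplies is needed.

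\emph{Step 1: decompose $\A$ by pairing type.} For each of the three perfect matchings $\pi\in\{\{12|34\},\{13|24\},\{14|23\}\}$ of the four coordinates, let $A_\pi$ be the set of tuples with zero-sum pairs along $\pi$. For example, $A_{\{12|34\}}=\{(x,-x,y,-y):x,y\neq0\}$. Each $A_\pi$ is stable under scaling by $(\Z/d)^\times$.

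The pairwise intersections are one-parameter sets. For instance, $A_{\{12|34\}}\cap A_{\{13|24\}}=\{(x,-x,-x,x)\}$. The triple intersection is empty, since it would force $2a_1=0$ with $a_1\neq0$, which is impossible for odd $d$. Inclusion--exclusion therefore applies at the level of orbits:
\[
\#\text{orbits}(\A)=3\,N_1-3\,N_2,
\]
where $N_1$ is the number of orbits on one $A_\pi$ and $N_2$ the number of orbits on one pairwise intersection.

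\emph{Step 2: count the orbits.} An orbit on $A_\pi$ is the same as a $(\Z/d)^\times$-orbit of pairs $(x,y)\in(\Z/d)^2$ with $x,y\neq0$. Group such vectors by their additive order $e\mid d$, $e>1$. The vectors of exact order $e$ correspond to primitive vectors in $(\Z/e)^2$. Their orbits under $(\Z/d)^\times$ coincide with orbits under $(\Z/e)^\times$, because $(\Z/d)^\times\to(\Z/e)^\times$ is surjective. These orbits are the points of $\mathbb P^1(\Z/e)$, so there are $\psi(e)=e\prod_{p\mid e}(1+1/p)$ of them.

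From these we remove the two orbits $(u,0)$ and $(0,u)$ of order $e$. This gives
\[
N_1=\sum_{1<e\mid d}\bigl(\psi(e)-2\bigr).
\]
For the intersection sets, an element $(x,-x,-x,x)$ is determined by $x\neq0$, and its orbit is determined by the order of $x$. Hence $N_2=\tau(d)-1$.

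\emph{Step 3: evaluate and check.} Since $\psi$ is multiplicative, so is $\Psi_2:=\sum_{e\mid d}\psi(e)$. At a prime power,
\[
\Psi_2(p^k)=1+\sum_{j=1}^k(p^j+p^{j-1})=\sigma(p^k)+\sigma(p^{k-1}),
\]
which matches the statement. Then
\[
N_1=\Psi_2(d)-1-2(\tau(d)-1),
\]
and so
\[
3N_1-3N_2=3\bigl(\Psi_2(d)-3\tau(d)+2\bigr).
\]
For a prime $p$ we have $\Psi_2=p+2$ and $\tau=2$, giving $\rho_\Q=3p-5$. The values at $d=25,35,49$ follow by direct substitution.

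The step needing the most care is Step~1. The orbit-level inclusion--exclusion requires that scaling preserves each $A_\pi$ and each intersection, and that no tuple lies in all three sets, which is where oddness of $d$ enters. It also requires confirming that ordered tuples, not permutation classes, are being counted, as stipulated in \S\ref{sec:setup}. The substantive input, that $\A$ equals the decomposable set, is taken from \S\ref{sec:prov} via the Dichotomy.
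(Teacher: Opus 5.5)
Your proof is correct, and it reaches the count by a different route from the paper's. The paper does inclusion--exclusion at the level of \emph{sets}, obtaining $|\A_d|=3(d-1)(d-2)$, and then applies Burnside. A character fixed by $t$ has entries in the $g_t$-torsion, $g_t=\gcd(t-1,d)$, and is decomposable at that level, so $\mathrm{Fix}(t)=3(g_t-1)(g_t-2)$. The closed form then follows from the gcd-sums $\sum_t g_t=\tau(d)\varphi(d)$ and $\sum_t g_t^2=\varphi(d)\Psi_2(d)$, which the paper only asserts to be multiplicative; the prime-power evaluation is left implicit.

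You instead apply inclusion--exclusion directly to orbits. This is legitimate because each $A_\pi$ and each intersection is a union of orbits. You then count orbits by stratifying the pairs $(x,y)$ by additive order $e$, identifying each stratum with $\mathbb P^1(\Z/e)$ minus its two axis points.

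Your route has three advantages. It explains where $\Psi_2$ comes from: it is $\sum_{e\mid d}\psi(e)$, the number of cyclic subgroups of $(\Z/d)^2$. It reduces the prime-power check to a one-line telescoping sum. And it avoids both the fixed-point/level-transfer step and the evaluation of $\sum_t\gcd(t-1,d)^2$.

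The paper's route has its own advantage. The fixed-point counts satisfy $\mathrm{Fix}(t)=\chi_{\NS}(t)-1$, because here the module is an untwisted permutation module. So the same computation gives $\rho_K$ for every $K\subseteq\Q(\zeta_d)$ by averaging over a subgroup $H$, as in Theorem~\ref{thm:J1}. Your count is tailored to the full group $(\Z/d)^\times$. It does adapt easily, since $(\Z/e)^\times$ acts freely on primitive vectors of $(\Z/e)^2$, so one just divides by the size of the image of $H$.

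Both arguments rest on the same external input: that $\A$ is exactly the decomposable set when $\gcd(d,6)=1$, with trivial twists and stabilizer scalars as in Theorem~\ref{thm:dichotomy}.
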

\begin{proof}
By Theorem~\ref{thm:dichotomy} and the classification, $\rho_\Q=1+N_{\mathrm{orb}}$ with
$N_{\mathrm{orb}}$ the orbit count of the decomposable set. Inclusion--exclusion over the three
pair-partitions of four indices gives $|\A_d|=3(d-1)(d-2)$ (the triple intersection needs
$2x\equiv0$, impossible for odd $d$). A character fixed by $t$ has entries in the $m$-torsion,
$m=\gcd(t-1,d)$, and decomposability transfers to level $m$; hence $\mathrm{Fix}(t)=3(m-1)(m-2)$ and
Burnside gives $N_{\mathrm{orb}}=\frac{3}{\varphi(d)}\sum_t\bigl(g_t^2-3g_t+2\bigr)$, $g_t=\gcd(t-1,d)$.
The sums $\sum_t g_t=\tau(d)\varphi(d)$ and $\sum_t g_t^2=\varphi(d)\Psi_2(d)$ are multiplicative by CRT.
\end{proof}

\section{The field-of-definition threshold}\label{sec:threshold}

\begin{theorem}\label{thm:threshold}
Let $d$ be odd.
\begin{enumerate}
\item If $\gcd(d,6)=1$ (equivalently, every prime divisor of $d$ exceeds $3$), then every algebraic
character is decomposable and $\rho_\Q=$ the orbit ceiling.
\item If $3\mid d$ and $d\neq3$, then, for every $x$ such that all four entries are nonzero, the character
$a=\{x,\,x+d/3,\,x+2d/3,\,-3x\}$ is
algebraic, and its twist equals $\chi(3)^{-3x}$ exactly; for $x=1$ this is the Kummer character
of $3^{1/m}$, $m=d/3\ge3$, which is nontrivial. Hence $\rho_\Q<$ the ceiling: $3\mid d$ breaks it.
\end{enumerate}
\end{theorem}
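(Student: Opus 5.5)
The plan is to treat the two parts separately. Part (1) is essentially a repackaging of results already in hand. Part (2) rests on one exact Gauss-sum identity, namely the Hasse--Davenport product formula for $n=3$.

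\emph{Part (1).} For odd $d$ with $3\nmid d$, the classification of \S\ref{sec:prov} identifies $\A$ with the decomposable set. Lemma~\ref{lem:pairing} then gives twist exactly $1$ at every split prime for every orbit. The stabilizer characters are trivial by Lemma~\ref{lem:sublevel} combined with the pairing lemma at the odd sub-level. Proposition~\ref{prop:orbitcontrib} and \eqref{eq:orbitrule} therefore make every orbit contribute $1$, which gives the ceiling. This is exactly the $(\Leftarrow)$ direction of Theorem~\ref{thm:dichotomy}, so I would simply cite it.

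\emph{Part (2), algebraicity.} Write $m=d/3$ and take $t\in(\Z/d)^\times$. Since $3\nmid t$, we have $t\cdot d/3\equiv\pm d/3\pmod d$, so $ta$ is again of the form $\{y,\,y+d/3,\,y+2d/3,\,-3y\}$ with $y=tx$.

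Put $u=\fr{y/d}$. The Hermite-type identity $\sum_{j=0}^{2}\fr{u+j/3}=\fr{3u}+1$ handles the first three entries. The last entry contributes $\fr{-3u}$. Because $-3y\not\equiv0$, the quantity $3u$ is not an integer, so $\fr{3u}+\fr{-3u}=1$. The total is therefore $2$ for every $t$, and $a\in\A$.

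\emph{Part (2), twist.} Let $\psi=\chi^{m}$, which is a character of exact order $3$. Apply Hasse--Davenport with $n=3$ to $\eta=\chi^x$:
\[
g(\chi^x)\,g(\chi^x\psi)\,g(\chi^x\psi^2)=\chi^{-3x}(3)\,g(\chi^{3x})\,g(\psi)\,g(\psi^2).
\]
Multiply both sides by $g(\chi^{-3x})$. Since $d$ is odd, Lemma~\ref{lem:pairing} gives $\chi(-1)=1$, so $g(\chi^{3x})g(\chi^{-3x})=p$ and $g(\psi)g(\psi^{2})=\psi(-1)p=p$. Hence $\prod_i g(\chi^{a_i})=\chi(3)^{-3x}p^2$, and so $u_a(p)=\chi(3)^{-3x}$ exactly.

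\emph{Part (2), nontriviality.} For $x=1$ the value $\chi(3)^{-3}=(\chi^3)(3)^{-1}$ is the inverse of the $m$-th power residue symbol of $3$ at $\mathfrak p$. The entries are $(1,1+m,1+2m,-3)$, all nonzero because $d\ge9$. By Chebotarev, as in Definition~\ref{rem:terms}, this character of $\Gal(\Qbar/\Q(\zeta_d))$ is the Kummer character cutting out $\Q(\zeta_d,3^{1/m})$. It is trivial only if $3^{1/m}\in\Q(\zeta_d)$.

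That inclusion is impossible. Here $m\ge3$ is odd, so $\Q(3^{1/m})$ is a non-normal subfield of $\Q$-degree $m$, and a non-normal field cannot sit inside the abelian field $\Q(\zeta_d)$. Thus some split prime has $u_a(p)\ne1$. The orbit of $a$ then contributes $0$ by \eqref{eq:orbitrule}, and $\rho_\Q$ falls strictly below the ceiling.

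I expect the main obstacle to be the exact normalization in the Hasse--Davenport step: whether the factor comes out as $\chi^{-3x}(3)$ or its inverse, and the sign $\psi(-1)$. I would fix these by checking the convention against the geometric-Frobenius normalization of \S\ref{sec:setup}. This matters for the word ``exactly'' in the statement, though not for nontriviality, since inversion preserves it.
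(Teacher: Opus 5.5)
Your proposal is correct and follows the paper's proof essentially step for step: part (1) from the classification plus the pairing lemma, algebraicity via the Hermite identity $\sum_{j=0}^{2}\fr{y+j/3}=\fr{3y}+1$, the exact twist via the Hasse--Davenport product relation with $\psi=\chi^{d/3}$ and the partner factor $g(\chi^{-3x})$, and nontriviality because the non-normal field $\Q(3^{1/m})$ cannot lie in the abelian field $\Q(\zeta_d)$. Your explicit remark that $t\equiv\pm1\pmod 3$ makes $ta$ a progression of the same shape fills in a step the paper leaves implicit.
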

\begin{proof}
(1) is the classification input (\S\ref{sec:prov}) plus Lemma~\ref{lem:pairing}. For (2): algebraicity
follows from the Hermite identity $\sum_{j=0}^{2}\fr{y+j/3}=\fr{3y}+1$ applied
at $y=tx/d$: the fourth entry contributes $\fr{-3tx/d}$, and since $-3x\not\equiv0\ (d)$, $3tx/d$ is
non-integral for every unit $t$, so $\fr{3tx/d}+\fr{-3tx/d}=1$ and the four fractional parts sum
to $2$. For the twist, the Hasse--Davenport product relation with $\psi=\chi^{d/3}$ of order
$3$ gives
\[
\prod_{j=0}^{2}g(\chi^x\psi^j)=\chi(3)^{-3x}\,g(\chi^{3x})\,g(\psi)g(\psi^2),
\]
and $g(\psi)g(\psi^2)=p$ by
Lemma~\ref{lem:pairing}; with the partner $g(\chi^{-3x})$ the total is
$\chi(3)^{-3x}p^{2}$. Nontriviality: $u$ trivial at all split $p$ forces
$3^{1/m}\in\Q(\zeta_d)$ by Kummer theory and Chebotarev, impossible for $m\ge3$ since
$\Q(3^{1/m})/\Q$ is non-normal and subfields of cyclotomic fields are abelian. (For even $d$ the
argument fails at $m=2$: the radical is $\sqrt3$, which lies in the cyclotomic field
$\Q(\zeta_{12})$ (as $\sqrt{-3}\in\Q(\zeta_3)$ and $i\in\Q(\zeta_4)$) --- so the parity of $d$ is
load-bearing.)
\end{proof}

\begin{remark}[Higher dimension]\label{rem:higherdim}
The same mechanism --- the progression $\{x,\,x+d/\ell,\dots,x+(\ell-1)d/\ell,\,-\ell x\}$, padded with
zero-sum pairs, for an odd prime $\ell\le n+1$ dividing $d$ --- operates on $n$-dimensional Fermat
varieties in codimension $n/2$, where the invariant is no longer a Picard rank and the classification
input differs; the general-$n$ threshold (``every prime divisor of $d$ exceeds $n+1$'') is treated in
the companion paper.
\end{remark}

\begin{proposition}[Assembled rule, odd degree divisible by $3$]\label{prop:odd3}
Let $d$ be odd with $3\mid d$. Every decomposable orbit contributes $1$: its twist is trivial by
Lemma~\ref{lem:pairing}, and every stabilizer scalar is trivial, since the sub-level character of a
decomposable $b$ at odd level $m$ is trivial (the general sub-level mechanism of
Lemma~\ref{lem:sublevel}; at odd $m$ the parity law is vacuous --- $\Q(\zeta_{2m})=\Q(\zeta_m)$ --- and
Lemma~\ref{lem:pairing} at level $m$ gives the triviality). Every valid family-(b) member has nontrivial twist
(Lemma~\ref{lem:famnontriv}, odd case) and contributes $0$. The exceptional layer is supported on the
divisor levels in $\{15,21\}$. Hence
\[
\rho_\Q(X_d)\;=\;1+N_{\mathrm{orb}}(d)+\mathcal E_{d},\qquad
N_{\mathrm{orb}}(d)=\frac{3}{\varphi(d)}\sum_{t\in(\Z/d)^\times}\bigl(g_t^2-3g_t+2\bigr),\quad
g_t=\gcd(t-1,d),
\]
as in the proof of Theorem~\ref{thm:closedform}. For $d\le30$ ($d=9,15,21,27$) the exact evaluation
gives $\mathcal E_{d}=0$: every indecomposable orbit at levels $15$ and $21$ already has nontrivial vertical
twist, witnessed \emph{exactly} in $\Z[\zeta_m]$ at the single split primes $p=31$ and $p=43$
respectively ($36/36$ orbits each; ancillary \texttt{exceptional\_exact\_twist.py}), so
$\rho_\Q=1+N_{\mathrm{orb}}$:
$31,\,76,\,106,\,130$, matching \S\ref{sec:tables}.
\end{proposition}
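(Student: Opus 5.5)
The plan is to apply the counting rule \eqref{eq:orbitrule}, i.e.\ Proposition~\ref{prop:orbitcontrib}, stratum by stratum. I first split $\A$ for odd $d$ with $3\mid d$ according to the classification of \S\ref{sec:prov}. The strata are the decomposable characters, the family-(b) members (the progressions $\{x,x+d/3,x+2d/3,-3x\}$ of Theorem~\ref{thm:threshold}(2) and their conjugates), and the exceptional characters. The exceptional ones are supported on divisor levels $d'\mid d$, and for odd $d$ the admissible exceptional levels of \cite{GCS} ($12\le d'\le180$) that are odd reduce to $15$ and $21$. Each stratum is a union of $(\Z/d)^\times$-orbits, so $\rho_\Q-1$ is the sum of the orbit contributions from the three strata.

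\textbf{Decomposable stratum.} The twist is trivial by Lemma~\ref{lem:pairing}, since $d$ is odd. For the stabilizer, suppose $t$ fixes $a$. Then $a$ lives at sub-level $m=\gcd(t-1,d)$, which is odd, and Lemma~\ref{lem:sublevel} identifies $\varepsilon_a(t)$ with a Gauss-sum (sub-level) character at level $m$. That character is trivial by Lemma~\ref{lem:pairing} at level $m$, because $\Q(\zeta_{2m})=\Q(\zeta_m)$ kills the parity term. Hence each decomposable orbit contributes $1$. Their number is the Burnside count, copied verbatim from the proof of Theorem~\ref{thm:closedform}. The inclusion--exclusion giving $3(m-1)(m-2)$ fixed points only used that $m$ is odd ($2x\equiv0$ impossible), so it still holds when $3\mid d$. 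This yields the displayed $N_{\mathrm{orb}}(d)$.

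\textbf{Family-(b) stratum.} I invoke Lemma~\ref{lem:famnontriv} (odd case). By the Hasse--Davenport computation in Theorem~\ref{thm:threshold}(2), the twist is $\chi(3)^{-3x}$, the Kummer character of $3^{x\cdot 3/d}$. It is nontrivial because the relevant radical $3^{1/m'}$ has odd $m'\ge3$ and is therefore non-normal. I would also have to check that members with reduced exponent do not produce a cyclotomic radical. Since $d$ is odd, $\sqrt3$ never arises, which is exactly the parity remark in that proof. These orbits therefore contribute $0$, and the remaining orbits are exceptional, contributing $\mathcal E_d$ by definition.

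\textbf{Numerics and the main obstacle.} For $d\in\{9,15,21,27\}$, levels $15$ and $21$ occur only as divisors of $15$ and $21$ themselves, and $d=9,27$ have no exceptional layer at all. I then need one split prime per orbit at each level with $u_a(p)\ne1$, certified exactly in $\Z[\zeta_m]$ at $p=31$ ($\equiv1\bmod15$) and $p=43$ ($\equiv1\bmod21$), as the ancillary script does. This gives $\mathcal E_d=0$, and evaluating $N_{\mathrm{orb}}$ gives $31,76,106,130$. I expect the main obstacle to be the stabilizer step: transferring the cocycle $\varepsilon_a$ to the sub-level Gauss-sum character via Lemma~\ref{lem:sublevel} and confirming that at odd level no sign or parity residue survives. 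The $\mathcal E_d$ computation is finite but depends on the completeness of the exceptional classification.
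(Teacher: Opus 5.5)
Your proposal is correct and follows essentially the same route as the paper. You stratify $\A$ by the classification, let each decomposable orbit contribute $1$ via Lemma~\ref{lem:pairing} and the odd-level sub-level reduction of Lemma~\ref{lem:sublevel}, count those orbits with the unchanged Burnside formula (rightly noting that the fixed-point count $3(m-1)(m-2)$ uses only that $m$ is odd), discard family~(b) by its non-normal Kummer radical $3^{1/m'}$ with $m'$ odd, and eliminate the level-$15$ and level-$21$ exceptional orbits by the exact witnesses at $p=31,43$, which yields $31,76,106,130$.
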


\section{Even degree}\label{sec:even}

\begin{theorem}[Parity law]\label{thm:parity}
For even $d$ and $k=(p-1)/d$, a decomposable character with pair representatives $x_i$ has twist exactly
$(-1)^{k\sum x_i}$ --- the quadratic character of $\Q(\zeta_{2d})/\Q(\zeta_d)$.
\end{theorem}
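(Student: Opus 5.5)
The plan is to compute the twist pair by pair, exactly as in Lemma~\ref{lem:pairing}, but now keeping track of the sign $\chi(-1)$, which is no longer forced to be $1$. Write the decomposable character as two zero-sum pairs $\{x_1,-x_1\}$ and $\{x_2,-x_2\}$, with canonical representatives $x_i$. Then
\[
\prod_i g(\chi^{a_i})=g(\chi^{x_1})g(\chi^{-x_1})\,g(\chi^{x_2})g(\chi^{-x_2}).
\]
The standard identity $g(\eta)g(\eta^{-1})=\eta(-1)p$ for nontrivial $\eta$ applies to each pair, and gives
\[
u_a(p)=\chi^{x_1}(-1)\,\chi^{x_2}(-1)=\chi(-1)^{x_1+x_2}.
\]
A self-paired entry $d/2$ is handled by the same identity, since $\chi^{d/2}$ is quadratic and equals its own inverse. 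This is consistent with counting $d/2$ once per pair.

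Next I evaluate $\chi(-1)$. Since $\chi$ has exact order $d$ on the cyclic group $\F_p^\times$, it maps a generator to a primitive $d$-th root of unity $\zeta$. The element $-1$ is the $(p-1)/2$-th power of the generator, so
\[
\chi(-1)=\zeta^{(p-1)/2}=\zeta^{dk/2}=(-1)^k,
\]
using that $d$ is even. Therefore $u_a(p)=(-1)^{k\sum x_i}$, which is the stated formula.

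Two points of well-definedness remain. First, the parity of $\sum x_i$ does not depend on the choice of representatives. Changing $x$ to $-x\equiv d-x$ changes $x$ by $d-2x$, which is even. Likewise $\chi^{x}(-1)$ depends only on $x \bmod 2$ when $\chi(-1)=\pm1$, and $d$ is even. This is the choice-independence invoked in Definition~\ref{rem:terms}. Second, if the character admits more than one perfect matching into zero-sum pairs, the product computation above still yields a single value. The Gauss sum product does not depend on the matching, so the parity is forced to agree.

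Finally, I identify the twist as a Galois character. At a split prime $p=1+dk$, the prime $\mathfrak p$ of $\Q(\zeta_d)$ splits in $\Q(\zeta_{2d})$ iff $p\equiv1\pmod{2d}$, that is, iff $k$ is even. So $\mathfrak p\mapsto(-1)^k$ is the Frobenius description of the quadratic character of $\Q(\zeta_{2d})/\Q(\zeta_d)$. This extension is genuinely quadratic for even $d$, since $\zeta_{2d}\notin\Q(\zeta_d)$ when $d$ is even. Since degree-one primes have density one (Definition~\ref{rem:terms}), Chebotarev identifies $\chi_a$ with this character raised to the power $\Sigma x(a)$.

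The only delicate step is the bookkeeping for the self-paired entry and for the canonical-representative convention. Everything else is the Gauss sum identity.
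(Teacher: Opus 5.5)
Your proposal is correct and follows essentially the paper's proof: the pairwise identity $g(\eta)g(\eta^{-1})=\eta(-1)p$, the evaluation $\chi(-1)=(-1)^k$, and representative-independence via $x\mapsto d-x$. The one point to tighten is matching-independence. The intrinsic value $u_a(p)$ forces the parities of two matchings to agree only because split primes with $k$ odd exist ($p\equiv 1+d \pmod{2d}$, by Dirichlet), and the paper invokes exactly this.
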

\begin{proof}
$\chi(-1)=\zeta_d^{(p-1)/2}=(\zeta_d^{d/2})^{k}=(-1)^k$; each pair contributes $\chi^{x_i}(-1)$.
Representative-independence: $x\mapsto d-x$ shifts $\sum x_i$ by an even amount. Matching-independence
(when a character admits more than one perfect matching): the value $u_a(p)$ is intrinsic, and split
primes with $k$ odd exist ($p\equiv1+d\ (2d)$, Dirichlet), so the parity of $\sum x_i$ is pinned by
$u_a$ and agrees for any two matchings.
\end{proof}

\begin{theorem}[Family twists]\label{thm:families}
The three Aoki--Shioda one-parameter families (\S\ref{sec:prov}) have exact twists
\[
\text{(a)}\ \chi(2)^{2x}(-1)^{\frac{p-1}{2}},\qquad
\text{(b)}\ \chi(3)^{-3x}\,\chi^{3x}(-1),\qquad
\text{(c)}\ \chi(2)^{6x-d/2}(-1)^{\frac{p-1}{2}}.
\]
In particular the radicals $2^{2/d},3^{3/d}$ of \cite{GCS} are precisely the family Kummer characters.
\end{theorem}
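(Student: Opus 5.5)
The plan is to compute each family's normalized Jacobi product $\prod_i g(\chi^{a_i})/p^2$ exactly, using three identities: the Hasse--Davenport product relation for $\ell=2$ and $\ell=3$, the reflection $g(\eta)g(\eta^{-1})=\eta(-1)p$, and its quadratic case $g(\phi)^2=\phi(-1)p$. Here $\phi=\chi^{d/2}$ is the quadratic character and $\phi(-1)=(-1)^{(p-1)/2}$. The explicit family members come from \S\ref{sec:prov}; where I only know their shape, I use the one fixed in the proof of Theorem~\ref{thm:dichotomy}. Family (a) is $a=(2x,\,-x,\,\tfrac d2-x,\,\tfrac d2)$. Family (b) is the progression $(x,\,x+\tfrac d3,\,x+\tfrac{2d}3,\,-3x)$ of Theorem~\ref{thm:threshold}, now with $d$ even.

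For (a), the entries $-x$ and $\tfrac d2-x$ give the pair $\eta,\eta\phi$ with $\eta=\chi^{-x}$. Hasse--Davenport for $\ell=2$ gives $g(\eta)g(\eta\phi)=\bar\eta(4)\,g(\eta^2)\,g(\phi)$, which here equals $\chi(2)^{2x}g(\chi^{-2x})g(\phi)$. Multiplying by the partner $g(\chi^{2x})$ yields $\chi^{2x}(-1)p$, and $\chi^{2x}(-1)=\chi(-1)^{2x}=1$. The remaining entry $\tfrac d2$ contributes a second $g(\phi)$, so $g(\phi)^2=\phi(-1)p$. The total is $\chi(2)^{2x}(-1)^{(p-1)/2}p^2$, as claimed.

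For (b), I repeat the proof of Theorem~\ref{thm:threshold}(2) verbatim, with $\psi=\chi^{d/3}$ of order $3$. The only change is that for even $d$ the reflection of $g(\chi^{3x})$ against $g(\chi^{-3x})$ produces the sign $\chi^{3x}(-1)$, which is no longer forced to be $1$. Meanwhile $g(\psi)g(\psi^2)=\psi(-1)p=p$, since $\psi$ has odd order (Lemma~\ref{lem:pairing} at level $3$). This gives $\chi(3)^{-3x}\chi^{3x}(-1)$.

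For (c), which exists only when $6\mid d$, I would write the family member from \S\ref{sec:prov} as a product over a coset of the order-$6$ subgroup $\langle\chi^{d/6}\rangle$ (or of an order-$2$ and an order-$3$ subgroup). I then apply Hasse--Davenport twice, or once with $\ell=6$, whose multiplier is $\bar\eta(6^6)$. The $\chi(3)$-powers should cancel between the two applications. The $\chi(2)$-power should collect to $\chi(2)^{6x}$ times $\chi(2)^{-d/2}=\phi(2)$, the latter coming from the quadratic Gauss sum left over. The leftover $g(\phi)^2$ again supplies $(-1)^{(p-1)/2}$. The signs $\chi^{jx}(-1)$ from the reflections must cancel pairwise, and I would check this by the same parity bookkeeping as in Theorem~\ref{thm:parity}.

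I expect the bookkeeping in (c) to be the main obstacle. Specifically, I must track which entries pair under Hasse--Davenport, keep exponent signs consistent with the geometric-Frobenius convention (inverting all $u_a$ is harmless, by Definition~\ref{rem:terms}), and confirm that the residual $\phi(2)$ appears as $\chi(2)^{-d/2}$ and not as its inverse. Since $\phi(2)=\pm1$, the two agree anyway.

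The final clause follows from reading the results as Kummer characters. Under $\mathrm{Frob}_{\mathfrak p}\mapsto\chi(c)$ the $d$-th power residue symbol is the Kummer character of $c^{1/d}$. Hence $\chi(2)^{2x}$ and $\chi(3)^{3x}$ cut out $\Q(\zeta_d,2^{2x/d})$ and $\Q(\zeta_d,3^{3x/d})$. The residual signs $(-1)^{(p-1)/2}$ and $\chi^{3x}(-1)$ are cyclotomic, lying in $\Q(\mu_{2d})$. So the non-cyclotomic radicals generated are exactly $2^{2/d}$ and $3^{3/d}$, matching \cite{GCS}.
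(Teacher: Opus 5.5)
Your computations for (a) and (b) match the paper's: one $\ell=2$ duplication on $\{-x,\tfrac d2-x\}$ plus two reflections for (a), and the $\ell=3$ product relation plus the partner reflection $g(\chi^{3x})g(\chi^{-3x})=\chi^{3x}(-1)p$ for (b). Your Kummer reading of the last clause is also fine.

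The gap is (c), and it is not a bookkeeping issue: you have the wrong family. As listed in \S\ref{sec:prov}, family (c) is $\{4x,\,-2x,\,-x+\tfrac d4,\,-x+\tfrac{3d}4\}$ and requires $4\mid d$, not $6\mid d$; it already occurs at $d=8$ (Example~\ref{ex:d8descent}). A coset of an order-$6$ subgroup has six elements, so it cannot be the support of a $4$-tuple. Moreover, an $\ell=3$ or $\ell=6$ Hasse--Davenport step brings in powers of $\chi(3)$, which the correct answer never involves. The planned route therefore cannot produce the stated formula.

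What works is an iterated $\ell=2$ duplication through the order-$4$ character $\sigma=\chi^{d/4}$, with $\sigma^2=\phi$.
\begin{enumerate}
\item The last two entries form the $\{1,\phi\}$-coset of $\eta=\chi^{-x}\sigma$, so $g(\eta)g(\eta\phi)=\chi(4)^{x}\sigma(4)^{-1}g(\chi^{-2x}\phi)\,g(\phi)$.
\item The new factor $g(\chi^{-2x}\phi)$ forms a second $\{1,\phi\}$-coset with the entry $-2x$, and $g(\chi^{-2x})g(\chi^{-2x}\phi)=\chi(4)^{2x}g(\chi^{-4x})\,g(\phi)$.
\item The resulting $g(\chi^{-4x})$ cancels the entry $4x$ by reflection, with sign $\chi(-1)^{4x}=1$.
\end{enumerate}
Hence
\[
\prod_i g(\chi^{a_i})=\chi(4)^{3x}\,\sigma(4)^{-1}\,\bigl[g(\chi^{4x})g(\chi^{-4x})\bigr]\,g(\phi)^2
=\chi(2)^{6x-d/2}\,(-1)^{(p-1)/2}\,p^2 .
\]
Here $\chi(2)^{6x}=\chi(4)^{x}\chi(4)^{2x}$ comes from the two duplication multipliers. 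The factor $\chi(2)^{-d/2}$ is $\sigma(4)^{-1}$, the order-$4$ part of the first multiplier, not a leftover quadratic Gauss sum. No $\chi(3)$ appears at any stage.

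Your final-clause argument then also covers (c): $(2^{2/d})^{d/4}=\sqrt2$, so the radical $2^{(6x-d/2)/d}$ lies in $\Q(\zeta_d,2^{2/d})$.
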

\begin{proof}
Write $\rho=\chi^{d/2}$ (quadratic) and use $g(\eta)g(\eta^{-1})=\eta(-1)p$ for $\eta\neq1$,
$g(\rho)^2=\rho(-1)p=(-1)^{(p-1)/2}p$, and the Hasse--Davenport duplication
$g(\psi)g(\psi\rho)=\overline\psi(4)\,g(\psi^2)g(\rho)$. Family (b) is the $\ell=3$ progression of
Theorem~\ref{thm:threshold}(2): the product relation gives
\[
\prod_{j=0}^{2}g(\chi^{x+jd/3})=\chi(3)^{-3x}g(\chi^{3x})\,g(\psi_3)g(\psi_3^2),\qquad
g(\psi_3)g(\psi_3^2)=p\quad(\psi_3=\chi^{d/3}),
\]
and the partner factor is
$g(\chi^{3x})g(\chi^{-3x})=\chi^{3x}(-1)\,p$, whence $u_b=\chi(3)^{-3x}\chi^{3x}(-1)$: the reflection
factor is trivial for odd $d$ (where $\chi(-1)=1$, recovering Theorem~\ref{thm:threshold}(2)'s
formula) and equals $(-1)^{xk}$, $k=(p-1)/d$, for even $d$. For family (a) $\{2x,-x,-x+\tfrac d2,\tfrac
d2\}$, duplication on $\{-x,-x+\tfrac d2\}$ gives $g(\chi^{-x})g(\chi^{-x}\rho)=\chi(2)^{2x}g(\chi^{-2x})
g(\rho)$, whence
\[
u_a=\chi(2)^{2x}\,\bigl[g(\chi^{2x})g(\chi^{-2x})\bigr]\,g(\rho)^2/p^2
=\chi(2)^{2x}\,\chi^{2x}(-1)\,\rho(-1)=\chi(2)^{2x}(-1)^{(p-1)/2},
\]
using $\chi^{2x}(-1)=\chi(-1)^{2x}=1$. Family (c) $\{4x,-2x,-x+\tfrac d4,-x+\tfrac{3d}4\}$ ($4\mid d$): put
$\sigma=\chi^{d/4}$ (order $4$, $\sigma^2=\rho$), so $\chi^{-x+3d/4}=\chi^{-x}\sigma\rho$; two duplications
give $g(\chi^{-x}\sigma)g(\chi^{-x}\sigma\rho)=\chi(4)^x\sigma(4)^{-1}g(\chi^{-2x}\rho)g(\rho)$ and
$g(\chi^{-2x})g(\chi^{-2x}\rho)=\chi(4)^{2x}g(\chi^{-4x})g(\rho)$, whence
\begin{align*}
u_c&=\chi(4)^{3x}\sigma(4)^{-1}\bigl[g(\chi^{4x})g(\chi^{-4x})\bigr]g(\rho)^2/p^2\\
&=\chi(2)^{6x}\,\chi(2)^{-d/2}\,(-1)^{(p-1)/2}=\chi(2)^{6x-d/2}(-1)^{(p-1)/2},
\end{align*}
using $\sigma(4)=\chi(2)^{d/2}$ and $\chi^{4x}(-1)=1$. The non-cyclotomic component of each twist is
the Kummer character of $2^{2/d}$
(resp.\ $3^{3/d}$), matching the radicals of \cite{GCS}; for even $d$ the family-(b) twist carries in
addition the quadratic \emph{cyclotomic} reflection factor $\chi^{3x}(-1)$, and families (a), (c) the
displayed $(-1)^{(p-1)/2}$. The closed forms are corroborated numerically
(absolute discrepancy below $10^{-30}$) at several primes per degree \emph{of both parities of
$k=(p-1)/d$} (ancillary \texttt{family\_twists.py}; an even-$k$-only battery is blind to the
reflection factor).
\end{proof}

\begin{lemma}[Family twists are nontrivial off the degenerate strata]\label{lem:famnontriv}
Let $a$ be a valid member (all entries nonzero) of family (a), (b), or (c) with parameter $x$. Via
$(-1)^{(p-1)/2}=\chi(-1)^{d/2}$ and $\chi^{3x}(-1)=\chi_p((-1)^{x})$, each family twist of
Theorem~\ref{thm:families} is the Kummer character
$p\mapsto\chi_p(\beta)$ of a single rational number:
$\beta_{\mathrm a}=4^{x}(-1)^{d/2}$, $\beta_{\mathrm b}=(-27)^{-x}$, and $\beta_{\mathrm c}=2^{\,6x-d/2}$
(where $4\mid d$, so $(-1)^{d/2}=1$). The twist is nontrivial --- hence a single split prime eliminates
the orbit --- for every $x$ except the following degenerate cases:
\begin{enumerate}
\item family (a), $4\mid d$, $x\in\{d/4,\,3d/4\}$: the member is \emph{decomposable}
($\{d/2,\,3d/4,\,d/4,\,d/2\}$ and its conjugate) and belongs to the decomposable stratum;
\item family (b), $d$ even, $3x\equiv d/2\ (d)$ (solutions $x\equiv d/6 \pmod{d/3}$): the member is
\emph{decomposable}. For odd $d$ there is no degenerate case: \emph{every} valid family-(b) member at
odd degree has nontrivial twist;
\item family (c), $6x\equiv d/2\ (d)$: the member is \emph{decomposable} ($3x\equiv d/4\ (d)$ pairs
$-2x$ with $-x+\tfrac d4$, and $3x\equiv 3d/4\ (d)$ pairs $-2x$ with $-x+\tfrac{3d}4$; the remaining
pair is zero-sum); and family (c), $6x\equiv0\ (d)$: the twist is the quadratic character of $\sqrt2$,
nontrivial iff $8\nmid d$. For $8\mid d$ (which with the requisite $3\mid d$ forces $24\mid d$; the
valid parameters are $x\in\{\tfrac d6,\tfrac d3,\tfrac{2d}3,\tfrac{5d}6\}$) these members are
non-decomposable with \emph{trivial} vertical twist ($\sqrt2\in\Q(\zeta_d)$); they are counted with the
vertically-trivial exceptional orbits and decided by the stabilizer descent, i.e.\ within $\mathcal E_{d}$. At
$d\le30$ this occurs exactly at $d=24$: the orbits of $(2,14,16,16)$ and $(8,8,10,22)$
($x=4,8,16,20$), all removed there by the stabilizer descent $\varepsilon(13)=-1$
(Lemma~\ref{lem:sublevel}(ii), worked in the $d=24$ passage of Theorem~\ref{thm:qr}).
\end{enumerate}
\end{lemma}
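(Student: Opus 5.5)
Starting from the exact twists of Theorem~\ref{thm:families}, I first rewrite each one as a single Kummer character $p\mapsto\chi_p(\beta)$, where $\chi_p=\chi$ is the order-$d$ character on $\F_p^\times$. Two identities do this. The first is $(-1)^{(p-1)/2}=\chi(-1)^{d/2}=\chi((-1)^{d/2})$; it holds because $\chi(-1)=(-1)^k$ and $k d/2\equiv(p-1)/2$ modulo $2$. The second is $\chi^{3x}(-1)=\chi((-1)^{3x})=\chi((-1)^x)$. Multiplicativity of $\chi$ then gives
\[
\beta_{\mathrm a}=2^{2x}(-1)^{d/2},\qquad \beta_{\mathrm b}=3^{-3x}(-1)^{x}=(-27)^{-x},\qquad \beta_{\mathrm c}=2^{6x-d/2},
\]
where in case (c) the factor $(-1)^{d/2}$ is $1$ because $4\mid d$.

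By Kummer theory and Chebotarev (as in the proof of Theorem~\ref{thm:threshold}(2)), the character $p\mapsto\chi_p(\beta)$ is trivial on $\Gal(\Qbar/\Q(\zeta_d))$ iff $\beta\in(\Q(\zeta_d)^\times)^d$. So the main step is to decide when $\beta$ is a $d$-th power in $E$.

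Write $\beta=\pm\ell^{e}$ with $\ell\in\{2,3\}$. Suppose $\beta=\gamma^d$ with $\gamma\in E$. Then $|\gamma|$ generates a real radical extension $\Q(\ell^{e/d})$, which must be abelian. A real radical $\ell^{1/n}$ with $n\ge3$ generates a non-normal extension. Hence the reduced exponent $e/d$ must have denominator dividing $2$, that is, $2e\equiv0\pmod d$.

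\textbf{Family (a).} The condition is $4x\equiv0\pmod d$. If $x\equiv d/2$, the member has the entry $2x\equiv0$, so it is invalid. This leaves $x\in\{d/4,3d/4\}$ with $4\mid d$. A direct check shows the member is then $\{d/2,3d/4,d/4,d/2\}$, which is decomposable.

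\textbf{Family (b).} The condition is $6x\equiv0\pmod d$.
\begin{itemize}
\item If $3x\equiv0$, the entry $-3x$ vanishes, so the member is invalid.
\item For odd $d$, $6x\equiv0$ already forces $3x\equiv0$. So no valid degenerate member exists, and every valid member has nontrivial twist, as claimed.
\item For even $d$, the remaining case is $3x\equiv d/2$. Then $-3x\equiv d/2$, and some progression entry $x+jd/3$ equals $d/2$: this solves $x\equiv d/2-jd/3\pmod d$ and is consistent with $x\equiv d/6\pmod{d/3}$. These two entries of $d/2$ pair to zero, and the other two sum to $3x+d-(\text{that entry})-3x$, which is $\equiv0$. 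So the member is decomposable.
\end{itemize}

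\textbf{Family (c).} The condition is $12x-d\equiv0\pmod d$, i.e.\ $12x\equiv0$.
\begin{itemize}
\item If $6x\equiv d/2$, the stated pairings of $-2x$ with $-x+d/4$ or with $-x+3d/4$ give decomposability, according to $3x\equiv d/4$ or $3x\equiv 3d/4$. I verify this by summing each pair.
\item If $6x\equiv0$, then $\beta_{\mathrm c}=2^{-d/2}$, which up to $d$-th powers is $2^{d/2}$, so the twist is the quadratic character of $\sqrt2$. This is trivial iff $\sqrt2\in\Q(\zeta_d)$, i.e.\ iff $8\mid d$.
\item If $8\mid d$, the valid parameters are the $x$ with $6x\equiv0$ and $4x,2x\not\equiv0$ (together with the other entries nonzero), giving $x\in\{d/6,d/3,2d/3,5d/6\}$. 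This needs $3\mid d$, hence $24\mid d$.
\end{itemize}
These members are non-decomposable: I check that no perfect matching into zero-sum pairs exists, for instance at $d=24$ directly on $(2,14,16,16)$ and $(8,8,10,22)$. This places them in $\mathcal E_d$. The final claim about $d=24$ is a pointer to Lemma~\ref{lem:sublevel}(ii), which I would cite rather than reprove.

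The hard step is the Kummer reduction when $\beta$ carries a sign. For $\beta=-\ell^{e}$ I need to rule out that a cyclotomic root of unity absorbs the sign in a way that makes a non-real radical abelian. I would handle this by squaring: the twist is trivial only if $\beta^2=\ell^{2e}$ is also a $d$-th power. This gives the necessary condition $2\cdot 2e\equiv0\pmod d$ in the real-radical argument, which is what produced the conditions $4x\equiv0$, $6x\equiv0$ and $12x\equiv0$ above. Sufficiency of triviality is then checked in each degenerate case: by decomposability, or, for the $\sqrt2$ case, via $\zeta_8\in\Q(\zeta_d)$.
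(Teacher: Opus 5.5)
Your skeleton matches the paper's proof. You fold the sign factors into $\beta$, reduce triviality to $\beta\in(E^\times)^d$ by Kummer theory and Chebotarev, use non-normality of $\Q(\ell^{1/n})$ for $n\ge3$ to force $2e\equiv0\pmod d$, and enumerate the boundary family by family.

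\textbf{The gap: negative $\beta$.} This occurs in family (a) with $d\equiv2\pmod4$ and in family (b) with $x$ odd. Squaring only shows that $\ell^{2e}$ is a $d$-th power in $E$. Your real-radical argument then yields $4e\equiv0\pmod d$, i.e.\ $8x\equiv0$, $12x\equiv0$, $24x\equiv0$ for families (a), (b), (c). These are not the conditions $4x$, $6x$, $12x\equiv0$ that you actually enumerate, so your sentence saying squaring ``produced'' them is an arithmetic slip.
\begin{itemize}
\item For (a) the loss happens to be harmless: $8x\equiv0$ and $4x\equiv0$ have the same solutions when $d\equiv2\pmod4$.
\item For family (b) with $d\equiv12\pmod{24}$ and $x$ an odd multiple of $d/12$, squaring decides nothing. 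Concretely, at $d=12$, $x=1$ the member $(1,5,9,9)$ is valid and non-decomposable, with $\beta_{\mathrm b}=-1/27$. Then $\beta_{\mathrm b}^2=3^{-6}=(1/\sqrt3)^{12}$ and $\sqrt3=\zeta_{12}+\zeta_{12}^{-1}\in\Q(\zeta_{12})$, so no contradiction arises, although the lemma asserts this twist is nontrivial.
\end{itemize}

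\textbf{The fix.} The missing observation is that the sign costs nothing. Suppose $\beta=\pm\ell^{e}=\gamma^d$ with $\gamma\in E$. Then $\zeta:=\ell^{e/d}/\gamma$ satisfies $\zeta^d=\pm1$, so $\zeta\in\mu_{2d}$. Hence the positive real radical $\ell^{e/d}=\gamma\zeta$ lies in the abelian field $\Q(\zeta_{2d})$. This is exactly the justification missing from your assertion that $\Q(\ell^{e/d})$ ``must be abelian.'' It gives $2e\equiv0\pmod d$ for either sign without squaring, and it is the step the paper uses. With it, your enumeration goes through.

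\textbf{Three smaller repairs.}
\begin{itemize}
\item In family (b), the two remaining progression entries sum to $3x+d-\tfrac d2\equiv3x+\tfrac d2\equiv0$. Your displayed expression has a spurious $-3x$.
\item Non-decomposability holds for every $24\mid d$, not only $d=24$. The members are $\tfrac d{12}(1,7,8,8)$ and $\tfrac d{12}(4,4,5,11)$, and no pair in these level-$12$ tuples sums to $0$ mod $12$.
\item Your closing claim that decomposability makes the degenerate twists trivial is false for even $d$. At $d=4$, $x=1$ the member $(2,3,1,2)$ has parity twist $(-1)^k$. The lemma never needs this claim, since it only reroutes those members to the decomposable stratum.
\end{itemize}
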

\begin{proof}
Folding the parity/reflection factor into the base, $u_a(p)=\chi_p(\beta)$ with $\beta$ as displayed
($\chi_p(-1)^{d/2}=(-1)^{(p-1)/2}$ as in Theorem~\ref{thm:parity}; for (b),
$\chi^{3x}(-1)=\chi_p(-1)^{x}$, giving the sign of $(-27)^{-x}$). By Kummer theory and Chebotarev,
$u\equiv1$ at all split primes iff some $d$-th root of $\beta$ lies in $E=\Q(\zeta_d)$. Any $d$-th root
is $2^{e/d}\zeta$ (resp.\ $3^{e/d}\zeta$) with $\zeta^d=\pm1$, i.e.\ $\zeta\in\mu_{2d}$, where $e$ is
the exponent of $2$ (resp.\ $3$) in $\beta$; so triviality puts $2^{e/d}$ (resp.\ $3^{e/d}$) in the
abelian field
$\Q(\zeta_{2d})$, and $\Q(2^{e/d})=\Q(2^{1/(d/g)})$ (resp.\ $\Q(3^{1/(d/g)})$), $g=\gcd(e,d)$, is
normal only for $d/g\le2$. Thus
a nondegenerate $x$ (with $d/g\ge3$) has nontrivial twist. The boundary $d/g\le2$ is enumerated per
family. For (a),(b): $e\equiv0\ (d)$ is entry-invalid, and $e\equiv d/2\ (d)$ gives the listed
$x$-values, whose members are checked decomposable directly. For (c): $e\equiv0\ (d)$ is
$6x\equiv d/2$, the listed decomposable members; $e\equiv d/2\ (d)$ is $6x\equiv0$, where $\beta_{\mathrm
c}$ is $2^{d/2}$ times a $d$-th power, with $d$-th root $\sqrt2\,\zeta$, $\zeta\in\mu_d\subset E$
($\beta_{\mathrm c}>0$), so the twist is trivial iff $\sqrt2\in\Q(\zeta_d)$ iff $8\mid d$; for
$8\nmid d$ it is the nontrivial quadratic character of $\sqrt2$ over $E$. Decomposability of the
listed degenerate members and
non-decomposability of the $24\mid d$ stratum are finite checks ($(2,14,16,16)$: no pair of entries
sums to $0$ mod $24$).
\end{proof}

\begin{theorem}[Assembled rule]\label{thm:assembled}
For even $d$,
\[
\rho_\Q(X_d)\;=\;1+\#\mathcal{O}_d+\mathcal E_{d},
\]
where:
\begin{enumerate}
\item $\mathcal{O}_d$ is the set of $\Sigma$-even decomposable orbits all of whose stabilizer sub-level
characters have even $\Sigma x(b)$, the stabilizer character on a lift $t'$ being
$\varepsilon(t')=(-1)^{((t'-1)/m)\,\Sigma x(b)}$;
\item $\mathcal E_{d}$ is the contribution of the vertically-trivial non-decomposable orbits (the
exceptional orbits proper together with the degenerate family-(c) stratum of
Lemma~\ref{lem:famnontriv}, which requires $24\mid d$), nonzero only when a divisor of $d$ lies in the
$22$-element exceptional list, and decided per degree by a finite exact procedure (the per-orbit
$\Z[\zeta_m]$ twist evaluation and the stabilizer descent).
\end{enumerate}
It reproduces $\rho_\Q(X_4)=5$, $\rho_\Q(X_6)=14$, $\rho_\Q(X_8)=14$, $\rho_\Q(X_{10})=26$.
\end{theorem}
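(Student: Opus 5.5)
The plan is to apply the counting rule \eqref{eq:orbitrule}, i.e.\ Proposition~\ref{prop:orbitcontrib}, one scaling orbit of $\A$ at a time, after splitting $\A$ into three strata: decomposable characters, valid family members, and exceptional characters.

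\textbf{Step 1 (vertical twists).} First I determine, for each stratum, which orbits are vertically trivial; an orbit with nontrivial twist contributes $0$.
\begin{itemize}
\item For decomposable $a$, Theorem~\ref{thm:parity} gives $u_a(p)=(-1)^{k\Sigma x(a)}$. Split primes with $k$ odd exist by Dirichlet, so the twist is trivial exactly when $\Sigma x(a)$ is even. This is the $\Sigma$-even condition, and it is well defined on orbits by the choice-independence in Theorem~\ref{thm:parity}.
\item For family members that are not decomposable, Lemma~\ref{lem:famnontriv} shows the twist is nontrivial, except on the degenerate family-(c) stratum with $24\mid d$. I assign that stratum to $\mathcal E_d$.
\item The remaining characters are exceptional. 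By the classification of \S\ref{sec:prov} they occur only when some divisor of $d$ lies in the $22$-element list, so $\mathcal E_d=0$ otherwise.
\end{itemize}

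\textbf{Step 2 (stabilizer characters).} On a vertically trivial orbit, the Galois group of $E=\Q(\zeta_d)$ over $\Q$ acts through $(\Z/d)^\times$, refined to lifts in $(\Z/2d)^\times$. The stabilizer $G_a$ then acts on the $E$-form of $V(a)$ by a scalar, and I compute it as follows.
\begin{itemize}
\item Take $t$ in the stabilizer and set $m=\gcd(t-1,d)$. Invoke Lemma~\ref{lem:sublevel} and Proposition~\ref{prop:cocycle}: a character fixed by $t$ descends to a character $b$ of level $m$, and $t$ acts on $V(a)$ as the level-$m$ Frobenius twist of $b$ raised to the power $(t'-1)/m$.
\item Applying Theorem~\ref{thm:parity} at level $m$ gives $\varepsilon(t')=(-1)^{((t'-1)/m)\Sigma x(b)}$. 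When $m$ is odd the factor is trivial by Lemma~\ref{lem:pairing}.
\end{itemize}
Proposition~\ref{prop:orbitcontrib} then says a $\Sigma$-even decomposable orbit contributes $1$ exactly when every such $\varepsilon$ is trivial. Summing over orbits gives $1+\#\mathcal O_d+\mathcal E_d$, where the leading $1$ is the hyperplane class, which is always invariant.

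\textbf{Step 3 (the four listed values).} For $d=4,6,8,10$ the claimed values follow by explicit orbit enumeration. For each orbit I compute $\Sigma x$ and check the sub-level parities. For $d=4,6,8,10$, no divisor of $d$ is in the exceptional list (all levels are at least $12$), so $\mathcal E_d=0$. I would do this bookkeeping by machine and cross-check against the table, for example $\rho_\Q(X_4)=5$ against ceiling $11$.

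\textbf{Main obstacle.} I expect Step 2 to be the hard part. The difficulty is justifying that the stabilizer acts on the rational $E$-form of $V(a)$ by exactly the sub-level sign, with no extra scalar ambiguity, and that lifting $t$ to $(\Z/2d)^\times$ makes this well defined. I would rely on Lemma~\ref{lem:sublevel} and Proposition~\ref{prop:cocycle} for this rather than reproving it. Two further checks are needed:
\begin{itemize}
\item that $\varepsilon$ is independent of the chosen lift, which follows because changing $t'$ by $d$ changes $(t'-1)/m$ by $d/m$, and this change must be shown compatible with the parity of $\Sigma x(b)$;
\item that the condition is constant along an orbit.
\end{itemize}
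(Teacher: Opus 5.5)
Your proposal is correct and follows the paper's proof essentially step for step. It uses the same disjoint stratification: decomposable characters; non-decomposable family members, killed by Lemma~\ref{lem:famnontriv}, with the $24\mid d$ family-(c) stratum routed into $\mathcal E_d$; and exceptional orbits. It then uses the parity law for the vertical twist, Lemma~\ref{lem:sublevel} with Lemma~\ref{lem:liftindep} and Proposition~\ref{prop:cocycle} for the stabilizer sign, and Proposition~\ref{prop:orbitcontrib} to count orbits. One small point: for even $d$ every unit $t$ is odd, so $m=\gcd(t-1,d)$ is always even and your odd-$m$ clause never applies.
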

\begin{proof}
For even $d$ the algebraic set $\A$ is covered by the decomposable characters, the three one-parameter
families (a)--(c), and finitely many exceptional Galois orbits (\S\ref{sec:prov}); we make this a
\emph{disjoint} stratification by assigning any decomposable character to the decomposable stratum, and
any non-decomposable family member with trivial vertical twist --- the $24\mid d$ stratum of
Lemma~\ref{lem:famnontriv}(3) --- to the exceptional stratum. (A
family or exceptional member can itself be decomposable for small $d$ --- e.g.\ at $d=12$ the family-(c)
character $\{4,10,2,8\}=\{4,8\}\sqcup\{10,2\}$ --- in which case it is governed by the parity law
(Theorem~\ref{thm:parity}; here the pair representatives are $4$ and $2$, $\Sigma=6$ even, so the twist
is $1$), not by Theorem~\ref{thm:families}.)
By Proposition~\ref{prop:orbitcontrib} a monomial orbit $\operatorname{Ind}_H^G\theta$ contributes
$\dim(\operatorname{Ind}_H^G\theta)^G\in\{0,1\}$, equal to $1$ iff the orbit's Gr\"ossencharacter twist
\emph{and} its stabilizer character are both trivial. For a decomposable orbit this holds iff its
representative has $\sum x_i$ even \emph{and} the quadratic stabilizer cocycle
$\varepsilon(t')=(-1)^{((t'-1)/m)\sum x(b)}$ (Theorem~\ref{thm:parity}, on a lift $t'$ to $(\Z/2d)^\times$;
derived in Lemma~\ref{lem:sublevel} and Proposition~\ref{prop:cocycle} below) is trivial on every fixing $t'$; these form $\mathcal O_d$. A \emph{genuinely-family}
(non-decomposable) member of the family stratum has nontrivial Kummer twist by
Lemma~\ref{lem:famnontriv} and contributes $0$. A remaining exceptional
orbit contributes iff its twist and stabilizer character (order dividing $2d$) are both trivial; this can
occur only when a divisor of $d$ lies in the $22$-element list \cite{Terasoma}, and for each such degree the
twist is an exact root of unity in $\Z[\zeta_d]$ decided case by case --- for $d\le30$, Theorem~\ref{thm:qr}
and its $m=24,28,30$ arguments decide every surviving class internally, and the remaining orbits are
eliminated by direct exact evaluation (rigorous for \emph{non}triviality; ancillary
\texttt{exceptional\_exact\_twist.py}); the resulting finite, per-degree
count is $\mathcal E_{d}$. Summing the
three disjoint contributions and the hyperplane gives $\rho_\Q=1+\#\mathcal O_d+\mathcal E_{d}$, e.g.\ $\rho_\Q(X_4)=5$,
$\rho_\Q(X_6)=14$, $\rho_\Q(X_8)=14$, $\rho_\Q(X_{10})=26$; the exceptional term $\mathcal E_{d}$ is a finite per-degree
count, not a closed form (see \S\ref{sec:tables} for $d\le30$).
\end{proof}

\begin{theorem}[The doubling/kernel telescope]\label{thm:qr}
An exceptional character at level $m=2q$ ($q$ odd), with all entries odd, has twist identically $1$
when its $\psi$-exponent multiset is invariant under
doubling.
Precisely, write $m=2q$ with $q$ odd; an entry equal to $q=m/2$ has $\psi$-exponent $0$ (trivial order-$q$
part) and belongs to the $2$-part. If all entries are odd and the \emph{multiset} $S$ of
\emph{nonzero} $\psi$-exponents in $(\Z/q)\smallsetminus\{0\}$ (the reductions mod $q$ of the entries not divisible
by $q$, taken with multiplicity; they need not be units mod $q$) is invariant under $r\mapsto2r$ --- equivalently, is a union of
complete orbits of the doubling map $r\mapsto2r$ (a bijection of $(\Z/q)\smallsetminus\{0\}$, as $2$
is a unit mod $q$) with constant multiplicity on each; in every application below the
exponents are distinct, so $S$ is a plain union of such orbits --- the twist telescopes to $1$ (the doubling
telescope).
In particular the $m{=}14$ characters $(1,7,9,11)$, $(3,5,7,13)$ --- whose nonzero $\psi$-exponent sets
$\{1,2,4\},\{3,5,6\}$ (the entry $7=q$ excluded, its $\psi$-exponent being $0$) are the quadratic
residues/non-residues in $(\Z/7)^\times$, each a $\langle2\rangle$-coset --- telescope to $1$, whence
$\mathcal E_{14}=8$ and $\rho_\Q(X_{14})=46$ unconditionally; the same telescope at $m=30$, a sub-level reduction
at $m=28$, and a direct Hasse--Davenport identity at $m=24$ (see the proof) give
$\mathcal E_{30}=6$, $\mathcal E_{28}=8$, $\mathcal E_{24}=6$ unconditionally as well.
\end{theorem}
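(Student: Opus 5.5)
The plan is to split the twist of an exceptional character at level $m=2q$ along the decomposition $\chi=\psi\rho$ (after a suitable normalization) into an order-$q$ part $\psi=\chi^{2}$-type component and the quadratic $\rho=\chi^{q}$. Since every entry $a_i$ is odd, each factor $g(\chi^{a_i})$ is a Gauss sum of a character of the form $\psi^{r_i}\rho$, where $r_i$ is the $\psi$-exponent. Entries equal to $q$ give $g(\rho)$. I would then apply the Hasse--Davenport duplication in the form already used for Theorem~\ref{thm:families}: \[ g(\eta)\,g(\eta\rho)=\overline\eta(4)\,g(\eta^2)\,g(\rho). \] Read backwards, this rewrites $g(\psi^{r}\rho)$ as $\psi^{r}(4)\,g(\psi^{2r})\,g(\rho)/g(\psi^{r})$.

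Multiplying over a complete doubling orbit $r,2r,4r,\dots$ makes the ratios $g(\psi^{2r})/g(\psi^{r})$ telescope to $1$, because the orbit closes up. What is left is $\prod\psi^{r}(4)=\psi(4)^{\sum r}$ times a power of $g(\rho)$. With constant multiplicity on each orbit, $S$ is a union of orbits, and the product over $S$ becomes \[ \psi(4)^{\sum_{S}r}\; g(\rho)^{|S|}. \] Adding the $q$-entries contributes further $g(\rho)$ factors, so there are four $g(\rho)$ factors in all, and $g(\rho)^4=p^2$.

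The key steps are as follows.
\begin{enumerate}
\item Show $\sum_S r\equiv0\pmod q$. The sum over a doubling orbit satisfies $2\Sigma\equiv\Sigma$, hence $\Sigma\equiv0\pmod q$, and this kills the $\psi(4)$ factors.
\item Track the precise normalization of $\chi^{a}=\psi^{r}\rho$. I would choose $\psi=\chi^{q+1}$ or $\chi^{2}$ consistently so that the exponents mod $q$ agree with those in the statement.
\item Conclude $u_a(p)=1$ at every split prime, which gives the twist identically $1$.
\end{enumerate}

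The examples then follow by inspection. For $m=14$, the sets $\{1,2,4\}$ and $\{3,5,6\}$ are $\langle2\rangle$-cosets in $(\Z/7)^\times$, as the statement notes, and the analogous check at $m=30$ is similar. The counts $\mathcal E_{14}=8$ and $\mathcal E_{30}=6$ come from the orbit rule (Proposition~\ref{prop:orbitcontrib}). This needs two further inputs: the telescoped orbits must also have trivial stabilizer characters, via the sub-level descent of Lemma~\ref{lem:sublevel}; and all other exceptional orbits at that level must be eliminated by single split-prime witnesses computed exactly. The final values of $\rho_\Q$ are obtained by adding these counts to $1+\#\mathcal O_d$ from Theorem~\ref{thm:assembled}.

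For $m=28$ I would reduce along Lemma~\ref{lem:sublevel} to the level-$14$ telescope. For $m=24$ I would apply duplication plus the triplication product formula directly to the specific characters, such as $(2,14,16,16)$. The stabilizer descent $\varepsilon(13)=-1$ of Lemma~\ref{lem:famnontriv} must be accounted for there.

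I expect the main obstacle to be the bookkeeping of these count claims rather than the telescope itself. One must verify stabilizer triviality for exactly the right orbits and rely on exact nontriviality witnesses for the rest, so that the counts $8,6,8,6$ are neither over- nor under-counted. The sign factors from $\rho(-1)$ in the duplication also need care, and should be checked at primes with $k$ of both parities.
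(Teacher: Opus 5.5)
Your proof of the telescope itself is the paper's argument. It uses the same splitting $\chi^{a}=\rho\psi^{s}$ for odd $a$, with $\psi=\chi^{q+1}$, which is the normalization under which the $\psi$-exponents are the reductions mod $q$. It then solves the duplication relation for $g(\rho\psi^{s})$, cancels $\prod_{s\in S}g(\psi^{2s})/g(\psi^{s})$ by doubling-invariance, and finishes with $g(\rho)^{4}=(\rho(-1)p)^{2}=p^{2}$. So the $\rho(-1)$ signs you worry about cancel automatically.

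Two small points on this part:
\begin{itemize}
\item Solving the relation gives $\overline{\psi^{s}}(4)$, not $\psi^{s}(4)$. This is harmless because the exponent sum vanishes.
\item Your derivation of $\sum_{S}s\equiv0\pmod q$ from $2\Sigma\equiv\Sigma$ is a valid substitute for the paper's appeal to the zero-sum condition $\sum_i a_i\equiv0$. It uses only the doubling-invariance.
\end{itemize}

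The genuine gap is in the count $\mathcal E_{24}=6$. The character you propose to treat, $(2,14,16,16)$ (together with $(8,8,10,22)$), is the degenerate family-(c) stratum of Lemma~\ref{lem:famnontriv}(3). Its twist is already trivial because $\sqrt2\in\Q(\zeta_{24})$. However, each of its six orbits is fixed by $t=13$ and killed by $\varepsilon(13)=-1$ (Lemma~\ref{lem:sublevel}(ii)), so it contributes $0$.

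The six orbits that actually contribute are those of $(1,11,17,19)$, which your proposal never identifies. The telescope cannot reach them, since here $q=12$ is even. What is needed is the paper's explicit identity. Take $\psi=\chi^{8}$ of order $3$ and apply the triplication relation at $x=9$ and at $x=3$:
\begin{itemize}
\item $g(\chi^{9})g(\chi^{17})g(\chi)=\chi(3)^{-27}g(\chi^{3})\,p$;
\item $g(\chi^{3})g(\chi^{11})g(\chi^{19})=\chi(3)^{-9}g(\chi^{9})\,p$.
\end{itemize}
Multiplying and cancelling $g(\chi^{3})g(\chi^{9})$ leaves $\chi(3)^{-36}p^{2}=\rho(3)p^{2}=p^{2}$, since $p\equiv1\pmod{12}$.

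You also still owe the stabilizer bookkeeping; it is not a descent computation.
\begin{itemize}
\item At $m=14,24,30$ the tuple stabilizers of the surviving classes are trivial. The $48$ ordered characters fall into $8$, $6$, $6$ orbits respectively, each of full length $\varphi(m)$, so no stabilizer condition arises.
\item At $m=28$ the only nontrivial stabilizer element is $t=15\equiv1\pmod{14}$. Lemma~\ref{lem:sublevel} identifies $\varepsilon_a(15)$ with the value of the trivial level-$14$ character of $b=a/2$, which is $+1$.
\item At $m=30$ you should name the surviving class $(1,17,19,23)$. Its exponent set $\{1,2,4,8\}$ is $\langle2\rangle\subset(\Z/15)^\times$.
\end{itemize}
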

\begin{proof}
The twist $u_a=\prod_i g(\chi^{a_i})/p^{2}$ is a root of unity. Write $m=2q$ ($q$ odd), $\rho=\chi^{q}$ (order
$2$), and let $\psi$ (order $q$) satisfy $\chi=\rho\psi$; then for an odd exponent $a$ one has
$\chi^{a}=\rho\,\psi^{s}$ with $s=a\bmod q$, so $g(\chi^{a})=g(\rho\psi^{s})$, and an entry $a=q=m/2$ gives
$s=0$, i.e.\ the factor $g(\rho)$. Assume all four entries odd, and let $S$ be the multiset of \emph{nonzero}
$\psi$-exponents. The Hasse--Davenport duplication relation
$g(\psi^{s})\,g(\rho\psi^{s})=\overline{\psi^{s}}(4)\,g(\psi^{2s})\,g(\rho)$ gives
$g(\rho\psi^{s})=\overline{\psi^{s}}(4)\,g(\rho)\,g(\psi^{2s})/g(\psi^{s})$; multiplying over $S$, when $S$ is
invariant under $s\mapsto2s$ as a multiset (in particular, a plain union of complete doubling-orbits) the ratios
$\prod_{s\in S}g(\psi^{2s})/g(\psi^{s})$ telescope to $1$, and $\sum_{s\in S}s\equiv\sum_i a_i\equiv0\ (q)$
(the zero-sum condition $\sum_i a_i\equiv0$) makes
$\prod_{s\in S}\overline{\psi^{s}}(4)=\overline{\psi}^{\,\sum_S s}(4)=1$. Hence
$\prod_{s\in S}g(\rho\psi^{s})=g(\rho)^{|S|}$. An exceptional character has at most one entry equal to
$q$ (two such entries leave a zero-sum pair, i.e.\ a decomposable character), so $|S|\in\{3,4\}$: with
the $g(\rho)$ from the $s=0$ entry when present,
$\prod_i g(\chi^{a_i})=g(\rho)^{|S|+r}=g(\rho)^{4}=\bigl(\rho(-1)p\bigr)^{2}=p^{2}$ ($r\in\{0,1\}$
entries equal to $q$, $|S|+r=4$), so $u_a=1$. At $m=14$ ($q=7$) the
exceptional characters $(1,7,9,11),(3,5,7,13)$ have nonzero $\psi$-exponent sets $\{1,2,4\},\{3,5,6\}$ (the
entry $7=q$ excluded) --- the quadratic residues and non-residues mod $7$, each a $\langle2\rangle$-orbit
closed under doubling --- so both give $u_a=1$; and the tuple stabilizers of these characters are trivial
(the $48$ ordered characters of the two multisets fall into $8$ orbits of full length $\varphi(14)=6$;
$(3,5,7,13)\equiv3\cdot(1,7,9,11)$ up to reordering), so no stabilizer condition arises, giving $\mathcal E_{14}=8$.

The three remaining exceptional degrees are now decided \emph{internally}. (In each case the displayed
list of vertically-trivial orbits is exhaustive: every other indecomposable orbit at the level is
eliminated by an exact single-prime nontriviality witness in $\Z[\zeta_m]$ --- rigorous, as one failing
prime is a proof; ancillary \texttt{exceptional\_exact\_twist.py}.)
\emph{At $m=30$} ($q=15$; the $2$-part is exactly $2$) the telescope applies verbatim: the class
$(1,17,19,23)$ has all entries odd, none equal to $15$, and nonzero $\psi$-exponent set
$\{1,2,4,8\}=\langle2\rangle\subset(\Z/15)^\times$, so $u_a=1$ identically (the $r=0$ case above); its
tuple stabilizer is trivial (the $48$ ordered characters of the multisets $\{1,17,19,23\}$,
$\{7,11,13,29\}$ fall into $6$ full-length orbits), so all six contribute: $\mathcal E_{30}=6$.
\emph{At $m=28$} every vertically-trivial orbit is imprimitive, $a=2b$ with $b$ in the level-$14$
exceptional class just treated: by the $\pi_2^\ast$ identification (Lemma~\ref{lem:sublevel}) the twist
of $a$ equals that of $b$, trivial by the $m=14$ telescope; the only nontrivial tuple-stabilizer element
is $t=15\equiv1\ (14)$, and $\varepsilon_a(15)$ is the value at $\sigma_{15}$ of the \emph{trivial}
level-$14$ character of $b$, hence $+1$ --- so all eight orbits contribute: $\mathcal E_{28}=8$.
\emph{At $m=24$} the $12$ vertically-trivial orbits split into two $(\Z/24)^\times$-scaling families of
six. The six family-(c)-type orbits of $(2,14,16,16)$ and $(8,8,10,22)$ (the degenerate stratum of
Lemma~\ref{lem:famnontriv}(3)) are each stabilized by $t=13$, whose
sub-level $b=a/2$ at level $12$ is family-(c) with Kummer radical $\sqrt2\notin\Q(\zeta_{12})$, so
$\varepsilon(13)=\sigma_{13}(\sqrt2)/\sqrt2=-1$ (Lemma~\ref{lem:sublevel}(ii)) removes them. For the six
orbits of $(1,11,17,19)$, apply the Hasse--Davenport product relation (displayed in the proof of
Theorem~\ref{thm:threshold}; the relation holds for any multiplicative character) with
$\psi=\chi^{8}$ of order $3$, at $x=9$ and at $x=3$:
\[
g(\chi^{9})g(\chi^{17})g(\chi^{1})=\chi(3)^{-27}g(\chi^{3})\,g(\psi)g(\psi^{2}),\qquad
g(\chi^{3})g(\chi^{11})g(\chi^{19})=\chi(3)^{-9}g(\chi^{9})\,g(\psi)g(\psi^{2});
\]
multiplying and cancelling $g(\chi^{3})g(\chi^{9})$,
\[
\prod_i g(\chi^{a_i})=g(\chi^{1})g(\chi^{11})g(\chi^{17})g(\chi^{19})
=\chi(3)^{-36}\bigl(g(\psi)g(\psi^{2})\bigr)^{2}=\rho(3)\,p^{2}=p^{2},
\]
since $\chi^{-36}=\chi^{12}=\rho$ with $\rho(3)=\bigl(\tfrac3p\bigr)=1$ for $p\equiv1\ (12)$, and
$g(\psi)g(\psi^{2})=p$ by Lemma~\ref{lem:pairing}. Hence $u_a=1$ identically; the tuple stabilizers are
trivial, so all six contribute: $\mathcal E_{24}=6$.

Thus $\mathcal E_{14}=8$, $\mathcal E_{24}=6$, $\mathcal E_{28}=8$, $\mathcal E_{30}=6$ are theorems of this paper. They are corroborated
independently by the exact $\Z[\zeta_m]$ evaluation of every Jacobi-sum twist (ancillary
\texttt{exceptional\_exact\_twist.py}: a single split prime rigorously proves \emph{non}triviality, and
the trivial twists reproduce identically), and by the \emph{proven} per-character field-of-definition
computation of \cite{GCS} (their kernels $L_\chi$, \cite[Prop.~6.4, Rem.~6.6]{GCS}, whose published
aggregate compositum $M_{d'}=\prod_\chi L_\chi$ is consistent with the counts (the compositum alone
does not determine the number of trivial individual fields); we reproduce their character
classification prime-free in ancillary \texttt{gcs\_fieldofdef\_grounding.py}). A complete scan of
all exceptional levels up to $180$ is outside the scope of the present article.
\end{proof}

\subsection*{Derivation of the stabilizer cocycle (Theorem~\ref{thm:assembled})}

Write $G=\mathrm{Gal}(\Q(\zeta_d)/\Q)=(\Z/d)^\times$ and $\sigma_t$ for the automorphism
$\zeta_d\mapsto\zeta_d^{\,t}$.

\smallskip
\noindent\textbf{The descent as an averaged trace.} By Shioda's eigenspace decomposition each
algebraic eigenspace has a natural $E$-form: for algebraic $a$ we write $V(a)$, from here on, for
the $a$-eigenline of $\NS(X_{\Qbar})\otimes_\Z E$ --- a line over $E=\Q(\zeta_d)$ whose
complexification is the eigenspace of \S\ref{sec:setup}; for $a\in\mathcal S$ it lies in
$\NS(X_E)\otimes_\Z E$ --- and, after extension of scalars (as in
\cite[proof of Prop.~6.3]{GCS}),
\[
\NS\bigl(X_{E}\bigr)\otimes_\Z E \;=\; E h\ \oplus\ (M\otimes_\Q E),\qquad
M\otimes_\Q E=\bigoplus_{a\in\mathcal S} V(a),
\]
where $h$ is the hyperplane class and $M$ denotes the primitive part of $\NS(X_E)\otimes\Q$ --- a $\Q$-form of the right-hand sum ---
and $\mathcal S\subset\A$ is the set of $\Q(\zeta_d)$-rational algebraic characters: those
whose vertical (Gr\"ossencharacter) twist is trivial at every split prime, i.e.\ by the parity law
(Theorem~\ref{thm:parity}) the $\Sigma$-even decomposable characters together with the trivial-twist
exceptionals. The $E$-linear extension $\sigma_t\otimes1$ of the $G$-action (acting on the
N\'eron--Severi factor only) permutes the lines,
$\sigma_t\colon V(a)\to V(ta)$, and when $ta=a$ it acts on $V(a)$
by a root of unity $\varepsilon_a(t)$ (the action on $\NS\otimes\Q$ factors through a finite quotient
of $\Gal(\Qbar/\Q)$, so each stabilizer scalar has finite order); traces are unchanged by extension of
scalars, so
$\chi_M(t):=\operatorname{tr}(\sigma_t\mid M)=\operatorname{tr}(\sigma_t\mid M\otimes E)$. Thus
$M\otimes E=\bigoplus_{[a]}\operatorname{Ind}_{G_a}^{G}\varepsilon_a$
with $G_a=\mathrm{Stab}_G(a)$, and Proposition~\ref{prop:orbitcontrib} gives
\begin{equation}\label{eq:descenttrace}
\rho_\Q \;=\; 1+\dim M^{G} \;=\; 1+\frac1{|G|}\sum_{t\in G}\operatorname{tr}(\sigma_t\mid M)
\;=\; 1+\#\bigl\{\,[a]\ :\ \varepsilon_a|_{G_a}\equiv1\,\bigr\}.
\end{equation}
(The middle expression is the per-residue signed trace; computed via the Jacobi-sum Frobenius
eigenvalues it is Chebotarev-constant on each class of $G$, and reproduces
$\rho_{\Q(\zeta_d)}=1+\chi_M(\sigma_1)$
and every intermediate $\rho_K$ of \S\ref{sec:module}.) It remains to identify $\varepsilon_a$ on $G_a$.

\begin{lemma}[Sub-level reduction]\label{lem:sublevel}
Let $a\in\mathcal S$ (vertical triviality is needed only so that $\varepsilon_a$ is well defined on $G$ ---
two absolute lifts of $t$ differ by an element of $\Gal(\Qbar/\Q(\zeta_d))$, which acts trivially
on $V(a)$; the $\pi_f^\ast$ isomorphism below holds for any $a\in\A$) and let $t\in G_a$ fix $a$
entrywise, $ta_i\equiv a_i\ (d)$. Put
$m=\gcd(t-1,d)$ (a sub-level of $d$; this local $m$ is unrelated to the exceptional level $m=2q$ of
Theorem~\ref{thm:qr}) and $f=d/m$, so $a_i\in f\Z/d\Z$ and $a=f\,b$ with $b\in(\Z/m)^4$. Then $b$ is an
algebraic character at level $m$, the pullback $\pi_f^\ast$ along the $\Q$-morphism below induces a
$\Gal(\Qbar/\Q)$-equivariant isomorphism
$V_m(b)\otimes_{\Q(\zeta_m)}\Q(\zeta_d)\xrightarrow{\ \sim\ }V(a)$, and $\varepsilon_a(t)$
equals the value at $\sigma_{t'}$ (any lift $t'$ of $t$; note $t'\equiv1\ (m)$) of the finite-order
character of the level-$m$ Gr\"ossencharacter of $b$. In particular:
\textup{(i)} if $a$ (hence $b$) is decomposable, that character is the parity law
(Theorem~\ref{thm:parity} at level $m$), and for any lift $t'$ of $t$ to $(\Z/2d)^\times$
\[
\varepsilon_a(t)\;=\;(-1)^{\,\frac{t'-1}{m}\,\Sigma x(b)},
\]
where $\Sigma x(b)$ is the pair-representative sum of $b$;
\textup{(ii)} if $b$ carries the Kummer character of a radical $\delta^{1/w}$ (a family or exceptional
character, Theorem~\ref{thm:families}, Lemma~\ref{lem:famnontriv}), then
$\varepsilon_a(t)=\sigma_{t'}(\delta^{1/w})/\delta^{1/w}$, computed in the field generated
over $\Q(\zeta_m)$ by the radical; here $\varepsilon_a(t)$ is normalized as the scalar by which
$\sigma_{t'}$ acts on $V(a)=\pi_f^\ast V_m(b)$ (the direction convention is immaterial for the
order-$2$ values used in this paper). (E.g.\ $d=24$, $t=13$, $a=(2,14,16,16)$: $b=(1,7,8,8)$ is
family-(c) at level $12$ with $x=2$, radical $\sqrt2\notin\Q(\zeta_{12})$;
$\sigma_{13}(\sqrt2)=-\sqrt2$ in $\Q(\zeta_{24})$, so $\varepsilon_a(13)=-1$.)
\end{lemma}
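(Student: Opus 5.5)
The plan is to realise the fixed character $a$ geometrically as a pullback from the lower-degree Fermat surface and then read off $\varepsilon_a(t)$ as a Galois action on that surface.

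\textbf{Step 1: the map and algebraicity of $b$.} Since $ta_i\equiv a_i$, we have $(t-1)a_i\equiv0\ (d)$ for every $i$, so each $a_i$ is killed by $m=\gcd(t-1,d)$ and lies in $f\Z/d\Z$. Write $a=fb$ with $b\in(\Z/m)^4$. Then $b$ has nonzero entries and zero sum mod $m$. For algebraicity, note $\fr{s a_i/d}=\fr{s b_i/m}$. Every unit of $\Z/m$ lifts to a unit of $\Z/d$, so the Hodge condition for $a$ at all $s\in(\Z/d)^\times$ gives it for $b$ at all of $(\Z/m)^\times$.

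Next use the $\Q$-morphism $\pi_f\colon X_d\to X_m$, $[x_i]\mapsto[x_i^f]$. It is equivariant for $\mu_d^4\to\mu_m^4$, $\zeta\mapsto\zeta^f$. Hence $\pi_f^\ast$ carries the $b$-eigenspace of $X_m$ into the $\mu_d$-eigenspace with character $b\circ(\cdot)^f=fb=a$. Since $\pi_f^\ast$ is injective on $H^2$ (by the projection formula, $\pi_{f\ast}\pi_f^\ast=\deg\pi_f$) and both eigenspaces are lines, we get an isomorphism $V_m(b)\otimes\Q(\zeta_d)\cong V(a)$. Because $\pi_f$ is defined over $\Q$, $\pi_f^\ast$ on $\NS$ commutes with $\Gal(\Qbar/\Q)$. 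The same argument shows that the Frobenius eigenvalue on $V(a)$ at $p\equiv1\ (d)$ equals that on $V_m(b)$, i.e.\ $u_a(p)=u_b(p)$, after matching $\chi$ with $\chi^f$.

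\textbf{Step 2: identify the scalar.} Take a lift $\tilde\sigma\in\Gal(\Qbar/\Q)$ of $\sigma_t$. Its restriction to $\Q(\zeta_m)$ is $\sigma_{t\bmod m}=\mathrm{id}$, since $t\equiv1\ (m)$. So $\tilde\sigma$ restricted to $\Q(\zeta_m)$ lies in $\Gal(\Qbar/\Q(\zeta_m))$, which acts on the $\Q(\zeta_m)$-line $V_m(b)$ through the finite-order character $\chi_b$ of its Gr\"ossencharacter. By equivariance of $\pi_f^\ast$, $\tilde\sigma$ acts on $V(a)$ by $\chi_b(\tilde\sigma)$.

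Independence of the lift comes from $a\in\mathcal S$: two lifts differ by an element of $\Gal(\Qbar/\Q(\zeta_d))$, on which $\chi_b$ restricts to $\chi_a=1$. This is the one place vertical triviality enters. The main care point here is that the $E$-linear action $\sigma_t\otimes1$ used to define $\varepsilon_a$ really matches the semilinear Galois action on the $E$-form. I would handle this by choosing a $\Q(\zeta_m)$-rational generator of $V_m(b)$ and pulling it back, so that its image spans $V(a)$ compatibly.

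\textbf{Step 3: evaluate $\chi_b$ in the two cases.}

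\emph{Case (i), $b$ decomposable.} By Theorem~\ref{thm:parity} at level $m$, $\chi_b$ is the Frobenius-indexed character $p\mapsto(-1)^{\frac{p-1}{m}\Sigma x(b)}$. This is the character of $\Gal(\Q(\zeta_{2m})/\Q(\zeta_m))$ sending $\sigma_{s}$, for $s\equiv1\ (m)$, to $(-1)^{\frac{s-1}{m}\Sigma x(b)}$; check it on $\mathrm{Frob}_p=\sigma_p$. Since $2m\mid 2d$, evaluating at any lift $t'\in(\Z/2d)^\times$ gives the displayed formula. The value is well defined because changing $t'$ by $d$ changes $(t'-1)/m$ by $f$. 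When $f$ is odd this seems to affect the parity, but then the two lifts differ in $\Gal(\Q(\zeta_{2d})/\Q(\zeta_d))$, where $\chi_a$ is trivial, i.e.\ $\Sigma x(a)$ is even. This consistency check is the main obstacle, and I would verify it using $\Sigma x(a)\equiv f\,\Sigma x(b)\ (2)$.

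\emph{Case (ii), $b$ carries a Kummer character.} By Kummer theory $\chi_b(\tau)=\tau(\delta^{1/w})/\delta^{1/w}$ for $\tau$ fixing $\Q(\zeta_m)$. Evaluating at $\sigma_{t'}$ in $\Q(\zeta_d,\delta^{1/w})$ gives (ii). The $d=24$ example follows because $\sigma_{13}(\zeta_8)=\zeta_8^5$ sends $\sqrt2\mapsto-\sqrt2$.
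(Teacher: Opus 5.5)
Your proposal is correct and follows essentially the same route as the paper. You pull back along the $\Q$-morphism $\pi_f$, use injectivity and Galois-equivariance of $\pi_f^\ast$ to identify $\varepsilon_a(t)$ with the level-$m$ vertical character of $b$ at a lift fixing $\Q(\zeta_m)$, and then read off the parity law or the Kummer character. Your lift-consistency check via $\Sigma x(a)\equiv f\,\Sigma x(b)$ is what the paper isolates as its separate lift-independence lemma; as you observe, it is in any case forced by vertical triviality, since the two lifts $t',t'+d$ differ by an element of $\Gal(\Qbar/\Q(\zeta_d))$.
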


\begin{proof}
That $b$ is algebraic at level $m$ is the level-$m$ transfer already used in the proof
of Theorem~\ref{thm:closedform} (the Hodge condition
$\sum_i\langle sa_i/d\rangle=2$ for all $s$ restricts to $\sum_i\langle s'b_i/m\rangle=2$ for all
$s'\in(\Z/m)^\times$; when $a$ is decomposable its pairing pushes to a pairing of $b$). For the Galois
action, use the finite morphism defined over $\Q$
\[
\pi_f\colon X_d\longrightarrow X_m,\qquad [x_0:\dots:x_3]\longmapsto[x_0^{\,f}:\dots:x_3^{\,f}],
\]
which is well defined because $\sum_i(x_i^{\,f})^{m}=\sum_i x_i^{\,fm}=\sum_i x_i^{\,d}$. It intertwines
the $\mu_d^4$-action on $X_d$ with the $\mu_m^4$-action on $X_m$ through the $f$-power map $\mu_d\to\mu_m$,
so pullback carries the character-$b$ eigenline to the character-$fb$ one:
$\pi_f^\ast V_m(b)=V_d(fb)=V(a)$. Since $\pi_{f\ast}\pi_f^\ast=\deg(\pi_f)$, the map $\pi_f^\ast$ is
injective, hence (after extension of scalars to $\Q(\zeta_d)$) an isomorphism
$V_m(b)\otimes_{\Q(\zeta_m)}\Q(\zeta_d)\xrightarrow{\ \sim\ }V(a)$ of one-dimensional
$\Q(\zeta_d)$-spaces; being defined
over $\Q$ it is $\Gal(\Qbar/\Q)$-equivariant, so it identifies the stabilizer scalar $\varepsilon_a(t)$ with the
action of $\sigma_t$ on $V_m(b)$ \emph{with no extra character twist}. On $V_m(b)$ that action is the
value of the finite-order character of the level-$m$ Gr\"ossencharacter of $b$, which cuts out the
abelian extension of $\Q(\zeta_m)$ generated by the corresponding Kummer radical; by Chebotarev its
split-prime values determine it. If $b$ is decomposable, this character is the parity law
(Theorem~\ref{thm:parity} applied to $X_m$), the quadratic character of $\Q(\zeta_{2m})/\Q(\zeta_m)$;
the restriction of a lift $t'$ to that quotient is
$\frac{t'-1}{m}\bmod2$, giving $\varepsilon_a(t)=(-1)^{\frac{t'-1}{m}\Sigma x(b)}$ --- case
\textup{(i)}. If $b$ carries the Kummer character of $\delta^{1/w}$, the value at $\sigma_{t'}$ (which
fixes $\Q(\zeta_m)$, as $t'\equiv1\ (m)$) is $\sigma_{t'}(\delta^{1/w})/\delta^{1/w}$ --- case
\textup{(ii)}. This value is determined by $t'$ alone: since $a\in\mathcal S$ is vertically trivial,
$\Gal(\Qbar/\Q(\zeta_d))$ acts trivially on $V(a)\cong V_m(b)\otimes_{\Q(\zeta_m)}\Q(\zeta_d)$, so the
level-$m$ twist field $\Q(\zeta_m)(\delta^{1/w})$ embeds in $\Q(\zeta_d)$, and the restriction of
$\sigma_{t'}$ to it depends only on $t'\bmod d$ (e.g.\ at $d=24$:
$\Q(\zeta_{12},\sqrt2)=\Q(\zeta_{24})$).
\end{proof}

\begin{lemma}[Lift-independence]\label{lem:liftindep}
With the notation of Lemma~\ref{lem:sublevel} and $a$ $\Sigma$-even, the sign
$(-1)^{\frac{t'-1}{m}\Sigma x(b)}$ is independent of the chosen lift $t'\in(\Z/2d)^\times$ of $t$.
\end{lemma}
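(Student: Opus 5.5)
We must show that the sign $(-1)^{\frac{t'-1}{m}\Sigma x(b)}$ does not change when the lift $t'$ of $t$ varies. The plan is a direct parity computation: write down how two lifts differ and track the exponent modulo $2$.

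\emph{The two lifts.} Two lifts $t',t''\in(\Z/2d)^\times$ of the same $t\in(\Z/d)^\times$ satisfy $t''\equiv t'+\epsilon d\pmod{2d}$ with $\epsilon\in\{0,1\}$. Since $m\mid d$, we have $m\mid t'-1$ and $m\mid t''-1$, so both exponents $\frac{t'-1}{m}$ and $\frac{t''-1}{m}$ are integers. They are well defined modulo $2d/m=2f$, hence in particular modulo $2$. Their difference is
\[
\frac{t''-1}{m}-\frac{t'-1}{m}=\epsilon f .
\]
The two signs therefore differ by $(-1)^{\epsilon f\,\Sigma x(b)}$, and it suffices to show that $f\,\Sigma x(b)$ is even.

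\emph{When $f$ is even.} The claim is immediate.

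\emph{When $f$ is odd.} We use the $\Sigma$-evenness of $a$ and relate it to $b$. Since $a=fb$ with $f$ odd, the pair-representative structure transfers. If $b$ has matching pairs with representatives $y_i$ at level $m$, then $a$ has the same matching, with pair entries $fy_i$, and $fy_i$ or $d-fy_i$ serves as a representative at level $d$. By the representative-independence already shown in Theorem~\ref{thm:parity} (replacing $x$ by $d-x$ changes the sum by an even amount), we get
\[
\Sigma x(a)\equiv f\sum y_i\equiv \sum y_i=\Sigma x(b)\pmod 2,
\]
using that $f$ is odd. The remaining input is that $\Sigma x(b)$ is itself well defined modulo $2$ at level $m$. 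This also needs the fact that $m$ is even here: if $m$ is odd, then $d=fm$ is odd, which contradicts the standing assumption of this subsection that $d$ is even. Hence $\Sigma x(b)\equiv\Sigma x(a)\equiv0$, and the sign is lift-independent.

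\emph{Main obstacle.} The delicate step is the self-paired entry convention. An entry $d/2$ is counted once per pair, and its image at level $m$ is $m/2$ only when $f$ is odd. I would check this case explicitly. If $a$ has a pair $\{d/2,d/2\}$, then $b$ has the pair $\{m/2,m/2\}$, with representatives $d/2=f\cdot m/2$ and $m/2$. These have the same parity because $f$ is odd, so the congruence above still holds. Matching-independence of $\Sigma x(b)$, when $b$ admits several matchings, is inherited from Theorem~\ref{thm:parity} at level $m$.
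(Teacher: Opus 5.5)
Your proof is correct and follows the paper's route: the two lifts differ by $d$, so everything reduces to the evenness of $f\,\Sigma x(b)$, which you compare with $\Sigma x(a)$ via the bijection $v\mapsto fv$ on zero-sum pairs. The only difference is that the paper obtains the exact integer identity $\Sigma x(a)=f\,\Sigma x(b)$, because scaling by $f$ preserves canonical representatives ($v<m/2\iff fv<d/2$, and $m/2\mapsto d/2$). That makes your case split on the parity of $f$, and your appeals to $m$ being even and to matching-independence at level $m$, unnecessary --- indeed your own congruence $\Sigma x(a)\equiv f\,\Sigma x(b)\pmod 2$, taken with the canonical representatives of $b$, already holds for every $f$.
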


\begin{proof}
The two lifts of $t$ are $t'$ and $t'+d$, so the exponent changes by $\frac{d}{m}\,\Sigma x(b)=f\,\Sigma x(b)$;
it suffices that this is even. Scaling by $f=d/m$ is a bijection from the zero-sum pairs of $b$ in $\Z/m$
to those of $a=fb$ in $\Z/d$ that preserves the canonical representative choice: for $v\in\Z/m$,
$v<m/2\iff fv<d/2$, the self-paired value $v=m/2$ maps to the self-paired $fv=d/2$, and
$f\cdot(m-v)=d-fv\equiv-fv\ (d)$. Hence, term by term, $\Sigma x(a)=f\cdot\Sigma x(b)$ (as integers, with
canonical representatives). Because $a$ is $\Sigma$-even, $\Sigma x(a)$ is even, so $f\,\Sigma x(b)=\Sigma x(a)$
is even.
\end{proof}

\begin{proposition}[The cocycle and the assembled rule]\label{prop:cocycle}
For even $d$, a $\Sigma$-even decomposable orbit $[a]$ contributes $1$ to $\rho_\Q$ iff for every
$t\in G_a$ the sub-level character $b=a/f$ (with $f=d/\gcd(t-1,d)$) has $\Sigma x(b)$ even; otherwise it
contributes $0$. Together with the family strata (contribution $0$, Lemma~\ref{lem:famnontriv},
with its $24\mid d$ stratum routed to $\mathcal E_{d}$) and the
exceptional term $\mathcal E_{d}$, \eqref{eq:descenttrace} becomes $\rho_\Q=1+\#\mathcal O_d+\mathcal E_{d}$, which is
Theorem~\ref{thm:assembled}.
\end{proposition}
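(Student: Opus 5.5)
The plan is to assemble Proposition~\ref{prop:cocycle} from the averaged-trace identity \eqref{eq:descenttrace}, Proposition~\ref{prop:orbitcontrib}, and the two preceding lemmas.

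\textbf{Step 1: reduce to stabilizer elements.} Fix a $\Sigma$-even decomposable orbit $[a]$. By the parity law (Theorem~\ref{thm:parity}) its vertical twist is trivial, so $a\in\mathcal S$. Its eigenline $V(a)$ therefore appears in $M\otimes E$, and $\varepsilon_a$ is a well-defined character of $G_a$. By Proposition~\ref{prop:orbitcontrib}, the orbit contributes $1$ exactly when $\varepsilon_a(t)=1$ for every $t\in G_a$, and $0$ otherwise.

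\textbf{Step 2: evaluate $\varepsilon_a(t)$.} Take $t\in G_a$, let $m=\gcd(t-1,d)$ and $f=d/m$, and apply Lemma~\ref{lem:sublevel}(i). Decomposability of $a$ pushes down to $b$, and
\[
\varepsilon_a(t)=(-1)^{\frac{t'-1}{m}\Sigma x(b)}.
\]
Lemma~\ref{lem:liftindep} shows this sign is independent of the lift $t'$.

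\textbf{Step 3: convert sign triviality into parity of $\Sigma x(b)$.}
\begin{itemize}
\item If $\Sigma x(b)$ is even, the sign is $+1$.
\item If $\Sigma x(b)$ is odd, I need some lift with $(t'-1)/m$ odd. The two lifts $t'$ and $t'+d$ have exponents differing by $f$.
\item When $f$ is odd, one of the two lifts gives an odd exponent, so the sign is $-1$. Lift-independence forbids one lift giving $+1$ and the other $-1$, so in fact $\Sigma x(b)$ cannot be odd when $f$ is odd. Indeed $\Sigma x(a)=f\Sigma x(b)$ is even, so $f$ odd forces $\Sigma x(b)$ even, and that case is vacuous.
\item When $f$ is even, $m\mid d/2$. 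I use $\gcd(t-1,d)=m$ exactly: if $t'-1$ is taken as an integer representative, then $(t'-1)/m$ is coprime to $f$ modulo the relevant factor. Since $f$ is even and $m$ is the full gcd, $(t'-1)/m$ must be odd, for otherwise $2m$ would divide both $t'-1$ and $d$.
\end{itemize}
Hence $\varepsilon_a(t)=1$ iff $\Sigma x(b)$ is even. This is the stated criterion, and Step~3 is where I expect the only real care to be needed.

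\textbf{Step 4: assemble $\rho_\Q$.} Use the disjoint stratification from the proof of Theorem~\ref{thm:assembled}, and treat the remaining strata as follows.
\begin{itemize}
\item $\Sigma$-odd decomposable orbits have nontrivial parity twist, shown by a split prime with $k$ odd, which exists by Dirichlet. They contribute $0$.
\item Genuinely-family members contribute $0$ by Lemma~\ref{lem:famnontriv}.
\item The $24\mid d$ family-(c) stratum and the exceptional orbits are by definition counted in $\mathcal E_d$.
\end{itemize}
Summing the orbit contributions in \eqref{eq:descenttrace} gives $\rho_\Q=1+\#\mathcal O_d+\mathcal E_d$.
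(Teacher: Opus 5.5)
Your proposal is correct and follows essentially the paper's argument. The only difference is which parity you split on. The paper splits on the parity of $q=\frac{t'-1}{m}$ for the canonical lift, using $\gcd(q,f)=1$. You split on the parity of $f$, showing that $q$ is odd for every lift when $f$ is even because $m$ is the full gcd. In both versions the remaining case is disposed of by $\Sigma x(a)=f\,\Sigma x(b)$ from Lemma~\ref{lem:liftindep}.
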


\begin{proof}
By \eqref{eq:descenttrace} the orbit contributes iff $\varepsilon_a\equiv1$ on $G_a$; by
Lemmas~\ref{lem:sublevel}--\ref{lem:liftindep} this holds iff $\frac{t'-1}{m}\Sigma x(b)$ is even for every
$t\in G_a$, and the condition depends only on $t$, not on the lift. Write $q=\frac{t'-1}{m}$, so
$\gcd(q,f)=1$ for the canonical lift. If $q$ is odd the condition is exactly ``$\Sigma x(b)$ even''. If $q$
is even then $f=d/m$ is odd, and Lemma~\ref{lem:liftindep} together with $\Sigma x(a)$ even give
$f\,\Sigma x(b)=\Sigma x(a)$ even, hence $\Sigma x(b)$ even --- so the condition holds automatically. In both
cases $[a]$ contributes iff every fixing $t$ has $\Sigma x(b)$ even; this is the definition of $\mathcal O_d$.
The family and exceptional strata are unchanged.
\end{proof}

\noindent The identity $\Sigma x(a)=f\,\Sigma x(b)$ was verified in all $2276$ stabilizer pairs $(t,a)$
(over decomposable \emph{characters}, not orbit representatives) for even $d\le30$ (zero violations); the configuration ``$f$ odd and $\Sigma x(b)$ odd'' never occurs on $M$
(confirming lift-independence there) but does occur for $\Sigma$-odd characters (outside $M$), so the
$\Sigma$-even hypothesis is necessary. Proposition~\ref{prop:cocycle} reproduces the decomposable part of the
table exactly at every degree with $\mathcal E_{d}=0$, the residual at $d=14,24,28,30$ being the exceptional
$\mathcal E_{d}=8,6,8,6$ (ancillary \texttt{evendescent\_cocycle\_check.py}). Lemma~\ref{lem:sublevel} rests on
Shioda's eigenspace decomposition and the parity law (case (i)), together with Kummer theory for the
family/exceptional sub-levels (case (ii)).

\begin{example}[The even-degree descent, worked for $d=8$]\label{ex:d8descent}
We exhibit every step of $\rho_{\mathbb{Q}(\zeta_8)}\rightsquigarrow\rho_{\mathbb{Q}}$
for the octic surface $X_8$, making Theorems~\ref{thm:assembled}--\ref{thm:qr}
fully explicit. Here $(\mathbb{Z}/8)^\times=\{1,3,5,7\}\cong C_2\times C_2$.

\smallskip
\noindent\emph{Strata.} Of the $|\A_8|=175$ algebraic characters, $127$ are
decomposable and $48$ lie on the two even families (a),(c) of
Theorem~\ref{thm:families} (no family (b), since $3\nmid8$). At the orbit level this
is $49$ orbits total (orbit ceiling $50$): $12$ family orbits, which carry the
nontrivial Kummer twist $\chi(2)^{\bullet}$ and contribute $0$; and $37$
decomposable orbits. By the parity law (Theorem~\ref{thm:parity}) a decomposable
orbit has twist $(-1)^{k\sum x_i}$, trivial at \emph{every} split prime iff
$\sum x_i$ is even. Exactly $19$ of the $37$ decomposable orbits are
$\Sigma$-even; the other $18$ are eliminated by any split prime with $k=(p-1)/8$
odd (e.g.\ $p=41$).

\smallskip
\noindent\emph{The split-prime-invisible descent.} A split prime $p\equiv1\ (8)$
is a \emph{trivial} element of $(\mathbb{Z}/8)^\times$ (Frobenius acts through
$\mathrm{Gal}(\overline{\mathbb{Q}}/\mathbb{Q}(\zeta_8))$), so it can only witness
the vertical twist $u_a$ and never the action of a nontrivial $t$. The remaining
descent $\mathbb{Q}(\zeta_8)\to\mathbb{Q}$ is governed entirely by the stabilizer
cocycle on $t\neq1$, which no split prime reaches. Concretely: an orbit with
representative $a$ fixed by $t$ (so $ta\equiv a$, $m:=\gcd(t-1,8)$) contributes
iff the sub-level character $b=a/(8/m)\in(\mathbb{Z}/m)^4$ has
$\varepsilon(t')=(-1)^{((t'-1)/m)\sum x(b)}=+1$ on the lift $t'$ to
$(\mathbb{Z}/16)^\times$ ($2d=16$); equivalently, iff $\sum x(b)$ is even.

Among the $19$ $\Sigma$-even decomposable orbits, exactly the six that make up the
$(\mathbb{Z}/8)^\times$-orbit family of the multiset $\{2,4,4,6\}$ are removed:
each is fixed by $t=5$ (with $m=\gcd(4,8)=4$), and its sub-level character
$b=\{1,2,2,3\}\subset\mathbb{Z}/4$ has $\sum x(b)=1+2\equiv1\pmod2$ (the self-paired
$2=d/2$ contributes once, the pair $\{1,3\}$ contributes its representative $1$),
so $\varepsilon(5')=-1$. The other $13$ $\Sigma$-even orbits survive (e.g.\
$\{1,1,7,7\},\{1,3,5,7\},\{2,2,6,6\},\{4,4,4,4\}$; note $\{2,2,6,6\}$ is also fixed
by $t=5$ but has sub-level $\{1,1,3,3\}$ with even $\sum x(b)=2$, so it survives).
Hence
\[
\rho_{\mathbb{Q}}(X_8)\;=\;1+\underbrace{19}_{\Sigma\text{-even dec.\ orbits}}
-\underbrace{6}_{\varepsilon(5')=-1\ \text{removes}}\;=\;14 .
\]

\smallskip
\noindent\emph{Module cross-check.} Equivalently, in the notation of
\S\ref{sec:module}, the signed trace vector of $\sigma_t$ on
$\mathrm{NS}(X_{\mathbb{Q}(\zeta_8)})\otimes\mathbb{Q}$ is
$\bigl(\chi_{\NS}(1),\chi_{\NS}(3),\chi_{\NS}(5),\chi_{\NS}(7)\bigr)=(56,\,2,\,-4,\,2)$,
the single negative entry $\chi_{\NS}(5)=-4$ being precisely the $\varepsilon(5')=-1$
cocycle above. Theorem~\ref{thm:J1} then reproduces, in one stroke, the whole
cyclotomic tower
\[
\rho_{\mathbb{Q}(\zeta_8)}=56,\quad
\rho_{\mathbb{Q}(\sqrt2)}=\rho_{\mathbb{Q}(\sqrt{-2})}=\tfrac{56+2}2=29,\quad
\rho_{\mathbb{Q}(i)}=\tfrac{56-4}2=26,\quad
\rho_{\mathbb{Q}}=\tfrac{56+2-4+2}4=14,
\]
so the descent is what separates $\mathbb{Q}(i)$ (which sees the cocycle,
$\rho=26$) from the other two quadratic subfields ($\rho=29$). The descent mechanism
admits an independent, geometrically disjoint check on the quartic $X_4$, whose $48$
lines generate $\NS$: the line-intersection form and the Galois permutation of lines
alone reproduce the entire tower $\rho_{\mathbb{Q}}(X_4)=5<
\rho_{\mathbb{Q}(i)}=8<\rho_{\mathrm{geom}}=20$ (ancillary
\texttt{lines\_descent\_quartic\_check.py}). For $d=8$ the lines do \emph{not} generate
$\NS(X_8)$ (\S\ref{sec:intro}), so no analogous line check is available;
$\rho_{\mathbb{Q}(i)}(X_8)=26$ rests on the module computation above.
\end{example}

\section{The tables}\label{sec:tables}
The \emph{decomposable} contributions ($4\le d\le30$) are unconditional: the even-degree descent underlying
them is derived in Proposition~\ref{prop:cocycle}, and each is decided by the \emph{exact} twist --- a
Gauss-sum product reduced by Hasse--Davenport to a root of unity in $\Z[\zeta_d]$ --- never by sampling split
primes: a single split prime proves \emph{non}triviality (and so eliminates an orbit), but a
\emph{contribution} is certified only by the exact evaluation (so the one-prime-in-a-trivial-coset artifact
cannot inflate these counts). The \emph{exceptional} contributions $\mathcal E_{d}$ are unconditional and, for
$4\le d\le30$, proved \emph{internally} (Theorem~\ref{thm:qr}): the doubling telescope at $d=14$ and
$d=30$, the sub-level reduction at $d=28$, and the direct Hasse--Davenport identity at $d=24$, with the
stabilizer descent of Proposition~\ref{prop:cocycle} and Lemma~\ref{lem:sublevel}, give
$\mathcal E_{14}=8,\ \mathcal E_{24}=6,\ \mathcal E_{28}=8,\ \mathcal E_{30}=6$ and $\rho_\Q(X_{24})=98,\ \rho_\Q(X_{28})=94,\
\rho_\Q(X_{30})=176$.
Here a vertically-trivial orbit has Frobenius eigenvalue $p$ at every split prime (an algebraic class over
$\Q(\zeta_m)$), so its twist is $1$ identically. Two independent corroborations: the \emph{exact}
$\Z[\zeta_m]$ evaluation of every Jacobi-sum twist (ancillary \texttt{exceptional\_exact\_twist.py};
rigorous for \emph{non}triviality), and the field-of-definition computation of \cite{GCS} --- for $d=14$
the aggregate $M_{14}=\Q(\zeta_{14})$ is trivial (all exceptional classes $\Q(\zeta_{14})$-rational); for $d=24,28,30$
which classes are $\Q(\zeta_d)$-rational is fixed by their per-character kernels $L_\chi$
\cite[Prop.~6.4, Rem.~6.6]{GCS}, which the exact-arithmetic Jacobi evaluation reproduces and whose
published aggregate is consistent with the counts.
Verification scripts (standalone; exact identities, high-precision numerical corroborations with
absolute discrepancy below $10^{-30}$, and a double-precision point-count layer decided by exact
integer equality) accompany
the manuscript.
\begin{center}\resizebox{\textwidth}{!}{$
\begin{array}{c|ccccccccccccc}
d & 4&5&6&7&8&9&10&11&12&13&14&15&16\\\hline
\rho_\Q(X_d) & 5&10&14&16&14&31&26&28&38&34&46&76&35
\end{array}$}\end{center}
\begin{center}\resizebox{\textwidth}{!}{$
\begin{array}{c|cccccccccccccc}
d & 17&18&19&20&21&22&23&24&25&26&27&28&29&30\\\hline
\rho_\Q(X_d) & 46&74&52&62&106&62&64&98&91&74&130&94&82&176
\end{array}$}\end{center}
For even $d$ the decomposable and exceptional parts of the count separate as follows (family
members that are not decomposable contribute $0$ throughout, Lemma~\ref{lem:famnontriv};
for odd $d$, $\rho_\Q=1+N_{\mathrm{orb}}(d)$ with $\mathcal E_{d}=0$ in the
tabulated range, Proposition~\ref{prop:odd3}):
\begin{center}\resizebox{\textwidth}{!}{$
\begin{array}{c|cccccccccccccc}
d & 4&6&8&10&12&14&16&18&20&22&24&26&28&30\\\hline
1+\#\mathcal O_d & 5&14&14&26&38&38&35&74&62&62&92&74&86&170\\
\mathcal E_{d} & 0&0&0&0&0&8&0&0&0&0&6&0&8&6
\end{array}$}\end{center}
\noindent(Provenance of each layer: odd $d$ --- Burnside and the closed form,
Theorem~\ref{thm:closedform}/Proposition~\ref{prop:odd3}; the even-$d$ decomposable count
$\#\mathcal O_d$ --- the stabilizer descent, Proposition~\ref{prop:cocycle}; $\mathcal E_{d}$ --- the
internal identities of Theorem~\ref{thm:qr} together with the exact per-orbit certificates.)
For higher degrees with $\gcd(d,6)=1$ the closed form of Theorem~\ref{thm:closedform} applies verbatim;
the ancillary script \texttt{asymptotic\_rho\_Q.py} prints the resulting exact values for all such
degrees $5\le d\le100$ (in the ancillary archive; not reprinted here).

\section{\texorpdfstring{$\NS(X_d)$ as a Galois module: $\rho_K$ for $K\subseteq\Q(\zeta_d)$, a reduction for all $K$}{NS(X\_d) as a Galois module: rho\_K for K in Q(zeta\_d), a reduction for all K}}\label{sec:module}
The reduction \eqref{eq:orbitrule} upgrades to a module statement. By Shioda's eigenspace decomposition
each $V(a)$ (the $E$-form of \S\ref{sec:even}) is a line over $\Q(\zeta_d)$; $\Gal(\Q(\zeta_d)/\Q)=(\Z/d)^\times$ permutes the
$V(a)$ by $a\mapsto ta$, while $\Gal(\Qbar/\Q(\zeta_d))$ acts on each $V(a)$ through the
Weil/Gr\"ossencharacter twist $u_a$ --- the same Gr\"ossencharacter \cite{GCS} use to pin down $L$. Hence
the $\zeta_d$-rational N\'eron--Severi group is
\[
\NS\bigl(X_{\Q(\zeta_d)}\bigr)\otimes\Q \;=\; \Q h\;\oplus\; M,\qquad
M\otimes_\Q E\;=\;\bigoplus_{a\in\mathcal S}V(a)\quad(E=\Q(\zeta_d)),
\]
with $h$ the hyperplane class and $M$ a $\Q$-form (as in \S\ref{sec:even}) that becomes, after $\otimes\,E$, the twisted permutation
(monomial) module on the eigenlines; its character is $\chi_M(t)=\tr(\sigma_t\mid M)$, and we write
$\chi_{\NS}(t)=1+\chi_M(t)$ for the character of the full group. For a $\Q$-representation,
$\dim(\cdot)^H=\tfrac1{|H|}\sum_{t\in H}\chi(t)$ is the standard
dimension of the $H$-invariants.

\begin{theorem}[Cyclotomic tower]\label{thm:J1}
For $\Q\subseteq K\subseteq\Q(\zeta_d)$ with $H=\Gal(\Q(\zeta_d)/K)$,
\[
\rho_K(X_d)\;=\;\frac1{|H|}\sum_{t\in H}\chi_{\NS}(t).
\]
This recovers $\rho_\Q$ ($H=(\Z/d)^\times$) and $\rho_{\Q(\zeta_d)}$ ($H=\{1\}$) and every field between.
For $\gcd(d,6)=1$ the twist is trivial (Theorem~\ref{thm:dichotomy}$(\Leftarrow)$, via Lemma~\ref{lem:pairing};
cf.\ \cite{GCS,Degtyarev}) and $\rho_K=1+\#\{H\text{-orbits of }\A\}$.
\end{theorem}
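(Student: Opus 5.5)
The plan is to reduce everything to the standard averaging formula for invariants of a finite group acting on a $\Q$-vector space. The key step is to identify $\rho_K$ with the $H$-invariants of $\NS(X_{\Q(\zeta_d)})\otimes\Q$.

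\emph{Reduction to $\Q(\zeta_d)$-rational classes.} Write $V=\NS(X_{\Qbar})\otimes\Q$ and $N=\Gal(\Qbar/\Q(\zeta_d))$. Since $K\subseteq\Q(\zeta_d)$, we have $\Gal(\Qbar/K)\supseteq N$, and therefore
\[
V^{\Gal(\Qbar/K)}=\bigl(V^{N}\bigr)^{\Gal(\Qbar/K)/N}=\bigl(V^N\bigr)^{H}.
\]
By Remark~\ref{rem:rank}, $V^N=\NS(X_{\Q(\zeta_d)})\otimes\Q=\Q h\oplus M$ as a $\Q$-representation of $G=(\Z/d)^\times$. This uses the decomposition recorded in \S\ref{sec:even}: $M\otimes E=\bigoplus_{a\in\mathcal S}V(a)$, where the eigenlines with nontrivial vertical twist have no $N$-invariants, by Proposition~\ref{prop:orbitcontrib} applied to $N$ acting on each line through $\chi_a$.

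\emph{Averaging.} $H$ is a finite group acting $\Q$-linearly on $\Q h\oplus M$. The projector $\frac1{|H|}\sum_{t\in H}\sigma_t$ is therefore idempotent with image the $H$-invariants, so its trace equals $\dim(\cdot)^H$. Hence
\[
\rho_K=\frac1{|H|}\sum_{t\in H}\chi_{\NS}(t),\qquad \chi_{\NS}(t)=1+\chi_M(t),
\]
where $h$ is Galois-fixed and contributes the $1$. The two extreme cases are $H=G$ and $H=\{1\}$. As a consistency check, $\chi_M(t)$ can be computed on $M\otimes E$ because traces are unchanged by extension of scalars. There it is the monomial trace: the sum of $\varepsilon_a(t)$ over the $a\in\mathcal S$ with $ta=a$.

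\emph{The case $\gcd(d,6)=1$.} Here Theorem~\ref{thm:dichotomy}$(\Leftarrow)$ and Lemma~\ref{lem:pairing} give $\mathcal S=\A$. They also show that every stabilizer scalar is trivial: Lemma~\ref{lem:sublevel}(i) applies at odd sub-level $m$, where the parity character is vacuous. So $M\otimes E$ is the plain permutation module on $\A$ restricted to $H$. Its invariant dimension is the number of $H$-orbits (Burnside, or Proposition~\ref{prop:orbitcontrib} orbit by orbit with $G$ replaced by $H$), and adding $h$ gives $\rho_K=1+\#\{H\text{-orbits of }\A\}$.

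The main obstacle I expect is justifying that the $G$-action on $V^N$ is well defined and matches the permutation $V(a)\mapsto V(ta)$ with scalars $\varepsilon_a$. This needs the vertical triviality on $\mathcal S$, already addressed in Lemma~\ref{lem:sublevel}. I would simply cite that discussion rather than reprove it.
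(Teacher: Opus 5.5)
Your proposal is correct and is essentially the paper's argument. The paper likewise identifies $\NS(X_{\Q(\zeta_d)})\otimes\Q=\Q h\oplus M$ with $M\otimes_\Q E=\bigoplus_{a\in\mathcal S}V(a)$, applies the standard averaging formula $\dim(\cdot)^H=\frac1{|H|}\sum_{t\in H}\chi(t)$, and for $\gcd(d,6)=1$ uses trivial twists and trivial stabilizer scalars to reduce to a permutation module on $\A$; your explicit step $V^{\Gal(\Qbar/K)}=\bigl(V^{N}\bigr)^{H}$ just spells out what the paper leaves implicit in the paragraph preceding the theorem.
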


\begin{theorem}[All number fields]\label{thm:J2}
$\NS(X_{\Qbar})\otimes\Q$ is defined over the field of definition $L$ of \cite{GCS}; $L/\Q$ is finite
Galois, and the $\Gal(\Qbar/\Q)$-action factors through $\Gal(L/\Q)$ by the definition of $L$. (Any
finite Galois $L'/\Q$ with $\Gal(\Qbar/L')$ acting trivially on $\NS(X_{\Qbar})\otimes\Q$ would serve;
$L$, which trivializes the integral Picard group, is such a field but need not be minimal for the
rational representation.) Hence
for every number field $K$ (the intersection $K\cap L$ taken inside the fixed $\Qbar$),
\[
\rho_K(X_d)\;=\;\rho_{K\cap L}\;=\;\frac1{|\Gal(L/(K\cap L))|}\sum_{\sigma}\chi_{\NS}(\sigma)
\]
(with $\chi_{\NS}$ now the character of the $\Gal(L/\Q)$-action),
reducing to Theorem~\ref{thm:J1} when $K\subseteq\Q(\zeta_d)$. Over $\Q(\zeta_d)$ the permutation is split,
so for $K\supseteq\Q(\zeta_d)$ each $V(a)$ is $K$-rational iff $K$ contains the Kummer generator of its
twist field --- for a family class with parameter $x$, the $\gcd$-reduced radical of its $\beta$
(Lemma~\ref{lem:famnontriv}; e.g.\ $2^{1/3}$ for family (a) at $d=12$, $x=2$, while the \cite{GCS}
radicals $2^{2/d},3^{3/d}$ arise at $x=1$ and generate the family layer jointly), and for the exceptional classes the generators over
$\Q(\zeta_d)$ of the per-character fields $L_\chi$ of \cite[Prop.~6.4, Rem.~6.6]{GCS}; the tower
$\Q(\zeta_d)\subseteq\Q(\zeta_{2d})\subseteq L$ (the middle being the $\mu_{2d}$ step, nontrivial for even
$d$, which trivializes the parity twist of Theorem~\ref{thm:parity}) reaches $\rho_{\mathrm{geom}}$ at $K=L$.
The reduction $\rho_K=\rho_{K\cap L}$ holds for every number field $K$ and every $d$. What is explicit
is delimited as follows: for $4\le d\le30$ the full character $\chi_{\NS}$ is tabulated (ancillary
\texttt{character\_table.csv}), so Theorem~\ref{thm:J1} evaluates $\rho_K$ for every
$K\subseteq\Q(\zeta_d)$ in that range, the exceptional contributions at $d=14,24,28,30$ being decided
by Theorem~\ref{thm:qr} and the exact certificates; the radical towers above $\Q(\zeta_d)$ are
evaluated to $\rho_{\mathrm{geom}}$ at the displayed degrees $d=8,9,16$; and for $\gcd(d,6)=1$ the
closed form of Theorem~\ref{thm:closedform} gives $\rho_\Q$ for every degree.
\end{theorem}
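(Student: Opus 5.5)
The plan is to prove the statement in two layers. First I establish that the $\Gal(\Qbar/\Q)$-action on $\NS(X_{\Qbar})\otimes\Q$ factors through the finite group $\Gal(L/\Q)$. Then I derive the invariant-dimension formula for an arbitrary $K$ by restriction to $\Gal(\Qbar/K)$.

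\textbf{Step 1: the action factors through $\Gal(L/\Q)$.} By \cite{GCS}, $L$ is the minimal field over which every class of $\Pic(X_{\Qbar})=\NS(X_{\Qbar})$ is defined (Remark~\ref{rem:rank} identifies $\Pic$ with $\NS$). Hence $\Gal(\Qbar/L)$ acts trivially. Since $X_d$ is defined over $\Q$, the set of fields of definition is stable under $\Gal(\Qbar/\Q)$; minimality therefore forces $L$ to be Galois over $\Q$. It is finite because $\NS$ is finitely generated and the kernel of the action is open. I will also note that any finite Galois $L'$ with $\Gal(\Qbar/L')$ acting trivially gives the same formula below, so minimality of $L$ is never used beyond normality.

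\textbf{Step 2: $\rho_K=\rho_{K\cap L}$.} Let $K$ be a number field and $N=\Gal(\Qbar/L)$, which is normal in $\Gal(\Qbar/\Q)$. The image of $\Gal(\Qbar/K)$ in $\Gal(L/\Q)$ is $\Gal(\Qbar/K)N/N$. This equals $\Gal(L/K\cap L)$, by Galois theory for the compositum: $KL/K$ is Galois with group $\Gal(L/K\cap L)$, since $L/\Q$ is Galois.

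Because $N$ acts trivially, the $\Gal(\Qbar/K)$-invariants coincide with the invariants of this image. The same image arises for $K\cap L$ itself, so $\rho_K=\rho_{K\cap L}$. The averaging formula then follows from the standard projector $\frac1{|H|}\sum_{h\in H}h$ onto $H$-invariants, with trace equal to the dimension, applied with $H=\Gal(L/K\cap L)$ and the character $\chi_{\NS}$. Remark~\ref{rem:rank} identifies this invariant dimension with the rank of $\NS(X_K)$.

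\textbf{Step 3: compatibility with Theorem~\ref{thm:J1}.} When $K\subseteq\Q(\zeta_d)\subseteq L$, the action of $\Gal(L/K)$ restricted to $\NS(X_{\Q(\zeta_d)})\otimes\Q$ factors through $\Gal(\Q(\zeta_d)/K)$. Moreover, the $\Gal(L/\Q(\zeta_d))$-invariants are exactly $\NS(X_{\Q(\zeta_d)})\otimes\Q$. Taking invariants in stages (first under the normal subgroup $\Gal(L/\Q(\zeta_d))$, then under the quotient) recovers the formula of Theorem~\ref{thm:J1}.

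\textbf{Step 4: the descriptive claims above $\Q(\zeta_d)$.} For $K\supseteq\Q(\zeta_d)$, each $V(a)$ is stable under $\Gal(\Qbar/K)$ and is acted on by the finite character $\chi_a$. Therefore $V(a)$ contributes to $\rho_K$ exactly when $\chi_a$ is trivial on $\Gal(\Qbar/K)$. By Kummer theory, this happens exactly when $K$ contains the twist field. For family classes that field is $E(\beta^{1/d})$, with $\beta$ as in Lemma~\ref{lem:famnontriv}; reducing by $\gcd$ gives the stated radical, for example $2^{1/3}$ at $d=12$, $x=2$. For exceptional classes it is the field $L_\chi$ of \cite{GCS}. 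The parity twist is trivialized by $\Q(\zeta_{2d})$ via Theorem~\ref{thm:parity}.

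The main obstacle is bookkeeping rather than depth. The delicate point is the claim that the rational representation is trivialized by $L$ (integral triviality implies rational triviality, which is fine). A second point is the precise statement of which parts are evaluated explicitly. Here I would simply cite the tabulated character and Theorems~\ref{thm:qr} and~\ref{thm:closedform} for those ranges, without proving anything new.
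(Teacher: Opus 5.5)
Your proposal is correct and takes the same route as the paper, which gives no separate proof and treats the result as a formal consequence of the $\Gal(\Qbar/\Q)$-action factoring through $\Gal(L/\Q)$: the image of $\Gal(\Qbar/K)$ is $\Gal(L/K\cap L)$, and the invariant dimension is the character average over that subgroup. Your Steps 3--4 (taking invariants in stages for $K\subseteq\Q(\zeta_d)$, and checking triviality of $\chi_a$ on $\Gal(\Qbar/K)$ for $K\supseteq\Q(\zeta_d)$) spell out what the statement asserts without argument, and normality of $L$ already follows because it is the fixed field of the kernel of the action, so the minimality argument is not needed.
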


The method produces, e.g.,
\begin{center}\resizebox{\textwidth}{!}{$
\rho_{\Q(i)}(X_8)=26,\quad \rho_{\Q(\sqrt2)}(X_8)=\rho_{\Q(\sqrt{-2})}(X_8)=29,\quad \rho_{\Q(\sqrt5)}(X_{10})=50,
$}\end{center}
the six cyclotomic-subfield ranks of $X_{25}$ --- indexed by the subfields
$K\subseteq\Q(\zeta_{25})$ of degree $1,2,4,5,10,20$ over $\Q$ --- being
$91$, $181$, $361$, $415$, $829$, $1657$, and the radical tower of
Theorem~\ref{thm:J2}:
\begin{gather*}
\rho_{\Q(\zeta_8)}=56\to\rho_{\Q(\zeta_{16})}=128\to\rho_L=176\ (d=8);\qquad
\rho_{\Q(\zeta_9)}=169\to\rho_L=217\ (d=9);\\
\rho_{\Q(\zeta_{16})}=296\to\rho_{\Q(\zeta_{32})}=632\to\rho_L=872\ (d=16)
\end{gather*}
(at $d=9$, $\Q(\zeta_{18})=\Q(\zeta_9)$). The character
$(56,2,-4,2)$ underlying the $d=8$ case is produced by the module computation above; the descent
mechanism it instantiates is checked independently, and disjointly from Gauss sums, on the quartic $X_4$
by its line geometry
(ancillary \texttt{lines\_descent\_quartic\_check.py}), where the $48$ lines generate $\NS$.

\section{\texorpdfstring{The average order of $\rho_\Q$}{The average order of rho\_Q}}\label{sec:asymp}

\begin{theorem}\label{thm:asymp}
$\displaystyle\sum_{d\le x,\ \gcd(d,6)=1}\rho_\Q(X_d)\ \sim\ \tfrac35\,x^2$; equivalently, the mean value
$\bigl(\#\{d\le x:\gcd(d,6)=1\}\bigr)^{-1}\!\!\sum_{d\le x,\,\gcd(d,6)=1}\rho_\Q(X_d)\sim\tfrac95\,x$
(the average order in the summatory sense, $\sum_{d\le x}$-matched, being $\tfrac{18}5\,d$).
\end{theorem}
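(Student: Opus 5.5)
The plan is to substitute the closed form of Theorem~\ref{thm:closedform} and reduce the statement to a standard mean-value computation for the multiplicative function $\Psi_2$ restricted to integers coprime to $6$. For $\gcd(d,6)=1$, Theorem~\ref{thm:closedform} gives $\rho_\Q(X_d)=3\Psi_2(d)-9\tau(d)+7$. Summing over $d\le x$ with $\gcd(d,6)=1$ produces three pieces. The term $\sum 7$ is $O(x)$. The term $\sum\tau(d)\le\sum_{d\le x}\tau(d)=O(x\log x)$. So everything reduces to showing
\[
\sum_{d\le x,\ \gcd(d,6)=1}\Psi_2(d)\sim\tfrac15x^2 .
\]
The mean-value reformulation then follows because $\#\{d\le x:\gcd(d,6)=1\}\sim x/3$, so the average is $\tfrac35x^2\cdot\tfrac3x=\tfrac95x$.

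\emph{Step 1: factor $\Psi_2$ as a convolution.} The defining relation $\Psi_2(p^k)=\sigma(p^k)+\sigma(p^{k-1})$ says exactly that $\Psi_2=\sigma*\mu^2$. Indeed, the squarefree indicator $\mu^2$ is supported on $1$ and $p$ at each prime power. Hence
\[
\sum_n\Psi_2(n)n^{-s}=\frac{\zeta(s)^2\zeta(s-1)}{\zeta(2s)} .
\]
Restricting to $\gcd(n,6)=1$ removes the Euler factors at $p=2,3$. That is, for $p\in\{2,3\}$ we multiply by
\[
\frac{(1-p^{-s})^2(1-p^{1-s})}{1-p^{-2s}} .
\]

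\emph{Step 2: elementary asymptotic.} Rather than using a Tauberian theorem, I would work directly. Write $\Psi_2\mathbf 1_{(\cdot,6)=1}=\mathrm{id}*g$, where $g$ is multiplicative, $g$ is supported on integers coprime to $6$, and $\sum_n|g(n)|n^{-\sigma}<\infty$ for every $\sigma>1$. This holds because $\Psi_2/\mathrm{id}$ has local factors $1+O(p^{-1})$, and after dividing out the $\zeta(s-1)$ factor the remaining series is $\zeta(s)^2/\zeta(2s)$ with the local factors at $2,3$ removed. Then
\[
\sum_{n\le x}\Psi_2(n)=\sum_{e\le x}g(e)\Bigl(\frac{x^2}{2e^2}+O(x/e)\Bigr)=\frac{x^2}{2}\sum_e\frac{g(e)}{e^2}+O\Bigl(x\sum_{e\le x}\frac{|g(e)|}{e}\Bigr).
\]
The error term is $O(x\log^2x)$, since $|g|$ is dominated by $\tau$-type functions.

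\emph{Step 3: compute the residue.} The constant is $\sum_e g(e)e^{-2}$, which equals $\zeta(2)^2/\zeta(4)$ times the removed local factors at $s=2$. We have $\zeta(2)^2/\zeta(4)=\tfrac52$. The factor at $p=2$ is $\tfrac{(3/4)^2(1/2)}{15/16}=\tfrac3{10}$, and the factor at $p=3$ is $\tfrac{(8/9)^2(2/3)}{80/81}=\tfrac8{15}$. The product is $\tfrac25$, giving $\sum\Psi_2\sim\tfrac15x^2$, and multiplying by $3$ gives $\tfrac35x^2$.

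The main obstacle is the bookkeeping in Step 2: making sure the convolution identity $\Psi_2=\sigma*\mu^2$ is correct at every prime power, and that the coprimality restriction is applied multiplicatively, so that the local factors at $2$ and $3$ are removed rather than merely divided. As a sanity check, I would compare the resulting constant numerically against the exact values produced by \texttt{asymptotic\_rho\_Q.py} for $d\le100$, for example $\rho_\Q(X_p)=3p-5$.
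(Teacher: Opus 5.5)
Your argument is the paper's: substitute the closed form, absorb the $\tau$ and constant terms into $O(x\log x)$, and read off the mean of $\Psi_2$ on $\gcd(n,6)=1$ from $\zeta(s-1)\zeta(s)^2/\zeta(2s)$ with the Euler factors at $2,3$ removed. The paper's main route is Delange's Tauberian theorem, and it mentions the convolution $\Psi_2=\mathrm{Id}\ast h$ only as an elementary alternative. You carry out that alternative, and $\Psi_2=\sigma\ast\mu^2$ is a tidy way to obtain the series. Your final constant $\tfrac25$, and hence $\tfrac15x^2$ and $\tfrac35x^2$, is correct.

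One claim in Step 2 is false, though, and it sits exactly in the bookkeeping you flagged. If $\Psi_2\mathbf 1_{(\cdot,6)=1}=\mathrm{id}\ast g$, then $g$ cannot be supported on integers coprime to $6$. Otherwise $(\mathrm{id}\ast g)(2)=2g(1)=2\neq0$. Dividing the restricted series by the full $\zeta(s-1)$ leaves the extra factors $1-p^{1-s}$:
\[
\sum_n g(n)n^{-s}=\frac{\zeta(s)^2}{\zeta(2s)}\prod_{p\in\{2,3\}}\frac{(1-p^{-s})^2(1-p^{1-s})}{1-p^{-2s}},
\]
so $g(2)=-2$, $g(3)=-3$, $g(6)=6$.

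Your Step 3 factors $\tfrac3{10}$ and $\tfrac8{15}$ already contain $1-p^{1-s}$ at $s=2$, so they belong to this correct $g$. The $g$ you describe in Step 2 (the series $\zeta(s)^2/\zeta(2s)$ with the local factors at $2,3$ removed) instead has $\sum_e g(e)e^{-2}=\tfrac65$. Paired with $\sum_{m\le y}m\sim y^2/2$, it would give the wrong value $\tfrac35x^2$ for $\sum\Psi_2$.

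There are two ways to repair this, and both give $\tfrac15x^2$:
\begin{itemize}
\item Take $g$ as displayed. Then $|g(n)|\le 6\cdot 2^{\omega(n)}$, so your $O(x\log^2x)$ error bound stands. The tail $\tfrac{x^2}{2}\sum_{e>x}|g(e)|e^{-2}$, which you left implicit, is $O(x\log x)$.
\item Keep the coprime-to-$6$ function and convolve it with $n\mathbf 1_{(n,6)=1}$, whose partial sums are $y^2/6+O(y)$.
\end{itemize}
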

\begin{proof}
By Theorem~\ref{thm:closedform}, $\rho_\Q(X_d)=3\Psi_2(d)+O(\tau(d))$ and $\sum_{d\le x}\tau(d)=O(x\log x)
=o(x^2)$. The Dirichlet series of the multiplicative $\Psi_2$ is
$\sum_n\Psi_2(n)n^{-s}=\zeta(s-1)\zeta(s)^2/\zeta(2s)$, with a simple pole at $s=2$ of residue
$\zeta(2)^2/\zeta(4)=\tfrac52$; restricting to $\gcd(n,6)=1$ divides the residue by the $p=2,3$ Euler
factors (product $\tfrac{25}4$ at $s=2$), giving $\tfrac25$, so $\sum_{n\le x,(n,6)=1}\Psi_2(n)\sim\tfrac15
x^2$ by a standard Tauberian theorem \cite{Delange}. (The Tauberian input can also be avoided
elementarily: $\Psi_2=\mathrm{Id}\ast h$ with $\sum_n h(n)n^{-s}=\zeta(s)^2/\zeta(2s)$ absolutely
convergent at $s=2$, so partial summation gives the same asymptotic.) The constant is confirmed numerically to four digits.
\end{proof}

\noindent The Galois action thus removes almost all of $\rho_{\mathrm{geom}}$: the defect
$\rho_{\mathrm{geom}}-\rho_\Q$ equals $3(p-2)^2$ for prime $p\ge5$, of order $\rho_{\mathrm{geom}}\sim3p^2$,
while $\rho_\Q\sim3p$.

\section{Provenance, verification, and scope}\label{sec:prov}
\emph{Status summary.} Every result is unconditional. The inputs divide into three classes:
(i) the classification of algebraic characters (\cite{Aoki83}, list \cite{Terasoma}; re-verified by
independent enumeration as described below); (ii) proofs in the text (everything else); and (iii) one
computer-assisted component --- the completeness of the exceptional-orbit \emph{elimination} for
$4\le d\le30$, with an exact $\Z[\zeta_m]$ witness per removed orbit, every surviving contribution
being proved by Theorem~\ref{thm:qr}.

The surface classification (decomposable $\cup$ three families $\cup$ $101$ exceptional Galois orbits across
$22$ levels $\le180$, odd levels only $15,21$) is Shioda's conjecture \cite{Shioda81} proved by Aoki
\cite{Aoki83}; the three families, in the representative form of Theorem~\ref{thm:families}, are
(a) $\{2x,-x,-x+\tfrac d2,\tfrac d2\}$ for $2\mid d$, (b) $\{x,x+\tfrac d3,x+\tfrac{2d}3,-3x\}$ for
$3\mid d$, and (c) $\{4x,-2x,-x+\tfrac d4,-x+\tfrac{3d}4\}$ for $4\mid d$, with $x$ ranging over the
residues making all entries nonzero (an equivalent parameterization: \cite[proof of Prop.~6.3]{GCS});
we use Terasoma's statement and list \cite{Terasoma} and have re-verified it by independent
enumeration and exhaustively at $d=9,15,21$. (The same $22$ levels index the field-of-definition fields
$M_{d'}$ of \cite{GCS}.)

\emph{Independent verification.} The Weil--Jacobi eigenvalue formula underlying the table was
cross-checked against brute-force $\F_p$ point-counting --- a computation disjoint from the Gauss-sum
machinery --- for all
$d=4$--$30$ (three split primes each): the total counts $\#X_d(\F_p)$ match the formula's aggregate
exactly (an aggregate check, not a per-eigenvalue certificate); the per-character Jacobi values then
reproduce the geometric
Picard numbers $1+\#\A$ ($20,37,86,\dots$) and the exact $\zeta_d$-ranks ($8,37,26,91,56$ for
$d=4$--$8$). (Split
primes are trivial in $\Gal(\Q(\zeta_d)/\Q)$, so this corroborates the aggregate formula at the
tested primes ($\rho_{\Q(\zeta_d)}$ itself comes from the per-character exact evaluation), and in no
case the descent to $\Q$ --- for which see the $X_4$ line check below.) All
twists are computed as exact objects (Gauss-sum products, Hasse--Davenport reductions, Kummer theory);
an \emph{exact} evaluation at one split prime proves nontriviality; floating-point evaluations are used
only as corroboration of exact identities (below $10^{-30}$ in the high-precision twist checks; the
point-count layer runs in double precision and is decided by exact integer equality, its rounding
error, $\sim10^{-8}$, lying far below the $1/2$ margin). The even-degree descent to $\Q$ (Proposition~\ref{prop:cocycle}) is corroborated by a second,
geometry-only route: for the quartic $X_4$, whose $48$ lines generate $\NS$, the tower
$\rho_\Q=5<\rho_{\Q(i)}=8<\rho_{\mathrm{geom}}=20$ is recovered from the line-intersection form
and the Galois permutation of lines alone. The exceptional contributions $\mathcal E_{24}=6,\mathcal E_{28}=8,\mathcal E_{30}=6$ are
proved internally (Theorem~\ref{thm:qr}: telescope, sub-level reduction, and a direct Hasse--Davenport
identity) and corroborated three ways: by exact $\Z[\zeta_m]$ evaluation of the Jacobi-sum twists (which
rigorously proves \emph{non}triviality at split primes), by an independent uniform Gauss-sum
recomputation reproducing
$\rho_\Q(X_{24})=98,\ \rho_\Q(X_{28})=94,\ \rho_\Q(X_{30})=176$, and by the per-character
field-of-definition computation of \cite{GCS} (their kernels $L_\chi$, \cite[Prop.~6.4, Rem.~6.6]{GCS};
the published output records the aggregate $M_{d'}$, consistent with the counts).

\emph{Algebraicity.} All results here are \emph{unconditional}: on the surface $X_d$ the algebraic classes
are exactly the rational $(1,1)$-classes by the Lefschetz theorem, so the eigenlines indexed by $\A$
exhaust the primitive N\'eron--Severi group after $\otimes\,\mathbb{C}$.

\medskip
{\sloppy
\emph{Code availability.} Standalone Python scripts reproducing every table and each load-bearing
computation are provided as ancillary files with this submission (see \texttt{README.md}): the $\F_p$
point-count verification, the closed form, the even-degree stabilizer-cocycle descent
(Proposition~\ref{prop:cocycle}, \texttt{evendescent\_cocycle\_check.py}), the exact $\Z[\zeta_m]$
exceptional evaluation (\texttt{exceptional\_exact\_twist.py}, rigorous for non-triviality; the
surviving-class trivialities are proved in Theorem~\ref{thm:qr}), the geometry-only quartic tower
(\texttt{lines\_descent\_quartic\_check.py}), the field-of-definition grounding against \cite{GCS}
(\texttt{gcs\_fieldofdef\_grounding.py}), the Galois-module ranks $\rho_K$, the radical tower, and the
average order. Two machine-readable certificates accompany the scripts
(\texttt{make\_certificates.py}): \texttt{exceptional\_certificates.csv} --- one row per
indecomposable orbit at all nine exceptional-adjacent levels $12,14,15,18,20,21,24,28,30$
(representative, orbit length, tuple
stabilizer, disposition with its exact witness or identity, contribution to $\mathcal E_{m}$ (the CSV column writes it as \texttt{E\_m}); $1270$ rows,
with $\mathcal E_{12}=\mathcal E_{18}=\mathcal E_{20}=0$ certified row-by-row; the completeness of the
exceptional-orbit elimination for $4\le d\le30$ is thus \emph{computer-assisted} --- each removed orbit
carries an exact $\Z[\zeta_m]$ Frobenius witness, while every surviving contribution is proved by the
displayed identities of Theorem~\ref{thm:qr}) ---
and \texttt{character\_table.csv}, the full character vectors
$(\chi_{\NS}(t))_{t\in(\Z/d)^\times}$ for $4\le d\le30$, whose per-degree averages reproduce the
entire $\rho_\Q$ table and whose values at $t=1$ reproduce every $\rho_{\Q(\zeta_d)}$.
Dependency versions are pinned (\texttt{requirements.txt}) and every ancillary code and data file
is checksummed (\texttt{SHA256SUMS}). Each script runs on a stock scientific Python stack (\texttt{numpy}, \texttt{sympy},
\texttt{mpmath}) and prints its own checks: exact identities are verified symbolically or by exact integer
arithmetic, and the remaining numerical corroborations have absolute discrepancy below $10^{-30}$
(high-precision checks) or are decided by exact integer equality with rounding error far below the
$1/2$ margin (the double-precision point-count layer).\par}

\emph{On the use of AI.} This work was carried out with substantial AI assistance in derivation,
computation, and drafting. Every load-bearing computational claim is accompanied by a reproducible
script or certificate, and the general mathematical statements are proved in the text or explicitly
reduced to cited theorems (the exceptional trivialities in Theorem~\ref{thm:qr}, with \cite{GCS} and
the scripts corroborating);
the load-bearing results were independently reproduced --- including the
descent to $\Q$, by a disjoint line-geometry computation on the quartic $X_4$ (its $48$ lines and their
intersection form), and the exceptional values by an independent uniform Gauss-sum recomputation. The
author takes full responsibility for the contents.

\end{document}